\newtheorem{thm}{Theorem}[section]
\newtheorem{lem}[thm]{Lemma}
\newtheorem{defi}[thm]{Definition}
\newtheorem{prop}[thm]{Proposition}
\newtheorem{cor}[thm]{Corollary}
\newtheorem{setup}[thm]{Set-up}
\newcommand{\GL}{\mathrm{GL}}
\newcommand{\GSp}{\mathrm{GSp}}
\newcommand{\PGSp}{\mathrm{PGSp}}
\newcommand{\PSp}{\mathrm{PSp}}
\newcommand{\Sp}{\mathrm{Sp}}
\newcommand{\proj}{\mathrm{proj}}
\newcommand{\wild}{\mathrm{w}}
\newcommand{\id}{\mathrm{id}}
\DeclareMathOperator{\Sym}{Sym}
\DeclareMathOperator{\Image}{im}
\DeclareMathOperator{\Gal}{Gal}
\newcommand{\Ind}{{\rm Ind}}
\newcommand{\Res}{{\rm Res}}
\newcommand{\lhdeq}{\trianglelefteq}
\newcommand{\calL}{\mathcal{L}}
\newcommand{\FF}{\mathbb{F}}
\newcommand{\NN}{\mathbb{N}}
\newcommand{\QQ}{\mathbb{Q}}
\newcommand{\ZZ}{\mathbb{Z}}
\newcommand{\Qbar}{\overline{\QQ}}
\newcommand{\Zbar}{\overline{\ZZ}}
\newcommand{\Fbar}{\overline{\FF}}
\begin{document}

\selectlanguage{british}

\title{Compatible systems of symplectic Galois representations and the inverse
Galois problem II.\\
Transvections and huge image.}
\author{
Sara Arias-de-Reyna\footnote{Universit\'e du Luxembourg,
Facult\'e des Sciences, de la Technologie et de la Communication,
6, rue Richard Coudenhove-Kalergi,
L-1359 Luxembourg, 
Luxembourg, sara.ariasdereyna@uni.lu},
Luis Dieulefait\footnote{Departament d'\`Algebra i Geometria,
Facultat de Matem\`atiques,
Universitat de Barcelona,
Gran Via de les Corts Catalanes, 585,
08007 Barcelona, Spain, ldieulefait@ub.edu},
Gabor Wiese\footnote{Universit\'e du Luxembourg,
Facult\'e des Sciences, de la Technologie et de la Communication,
6, rue Richard Coudenhove-Kalergi,
L-1359 Luxembourg, Luxembourg, gabor.wiese@uni.lu}}
\maketitle

\begin{abstract}
This article is the second part of a series of three articles about
compatible systems
of symplectic Galois representations and applications to the inverse
Galois problem.

This part is concerned with symplectic Galois representations having
a huge residual image, by which we mean that a symplectic group
of full dimension over the prime field is contained  up to conjugation.
A key ingredient is a classification of symplectic representations
whose image contains a nontrivial transvection:
these fall into three very simply describable classes, the reducible ones,
the induced ones and those with huge image.
Using the idea of an $(n,p)$-group of Khare, Larsen and Savin
we give simple conditions under which a symplectic
Galois representation with coefficients in a finite field has a huge image.
Finally, we combine this classification result with the main result of
the first part
to obtain a strenghtened application to the inverse Galois problem.

MSC (2010): 11F80 (Galois representations);
20G14 (Linear algebraic groups over finite fields),
12F12 (Inverse Galois theory).
\end{abstract}

\section{Introduction}

This article is the second of a series of three about compatible systems
of symplectic Galois representations and applications to the inverse
Galois problem.

This part is concerned with symplectic Galois representations having
a {\em huge image}: For a prime~$\ell$,
a finite subgroup $G \subseteq \GSp_n(\Fbar_\ell)$ is called {\em huge} if it
contains a conjugate (in $\GSp_n(\Fbar_\ell)$) of $\Sp_n(\FF_\ell)$.
By Corollary~\ref{cor:huge-equivalence} below this notion is the same as the one introduced
in Part~I~\cite{partI}.

Whereas the classification of the finite subgroups of $\Sp_n(\Fbar_\ell)$
appears very complicated to us, it turns out that the finite subgroups
containing a nontrivial transvection can be very cleanly classified
into three classes, one of which is that of huge subgroups
(see Theorem~\ref{thm:gp} below).
Translating this group theoretic result into the language of
symplectic representations whose image contains a nontrivial transvection,
these also fall into three very simply describable classes: the reducible ones,
the induced ones and those with huge image (see Corollary~\ref{cor:rep}).

Using the idea of an $(n,p)$-group of~\cite{KLS1} (i.e.\ of a maximally induced
place of order~$p$, in the terminology of Part~I), some number theory
allows us to give very simple conditions under which a symplectic
Galois representation with coefficients in $\Fbar_\ell$ has huge
image (see Theorem~\ref{thm:principal} below).

This second part is independent of the first, except for Corollary~\ref{cor:principal},
which combines the main results of Part~I~\cite{partI}, and the present Part~II.
In Part~III~\cite{partIII} written in collaboration with Sug Woo Shin,
a compatible system satisfying the assumptions of Corollary~\ref{cor:principal}
is constructed.

\subsection*{Statement of the results}

In order to fix terminology, we recall some standard definitions.
Let $K$ be a field. An $n$-dimensional $K$-vector space~$V$ equipped with a symplectic form
(i.e.\ nonsingular and alternating), denoted by $\langle v,w \rangle = v \bullet w$
for $v,w \in V$, is called a {\em symplectic $K$-space}.
A $K$-subspace $W \subseteq V$ is called a {\em symplectic $K$-subspace} if the restriction of
$\langle \cdot,\cdot \rangle$ to $W \times W$ is nonsingular (hence, symplectic).
The {\em general symplectic group} $\GSp(V,\langle \cdot,\cdot\rangle) =: \GSp(V)$
consists of those $A \in \GL(V)$ such that there is $\alpha \in K^\times$,
the {\em multiplier} (or {\em similitude factor}) of~$A$,
such that we have $(Av) \bullet (Aw) = \alpha(v \bullet w)$ for all $v,w \in V$.
The {\em symplectic group} $\Sp(V,\langle \cdot,\cdot\rangle) =: \Sp(V)$
is the subgroup of~$\GSp(V)$ of elements with multiplier~$1$.
An element $\tau \in \GL(V)$ is a {\em transvection} if $\tau - \id_V$ has rank~$1$,
i.e.\ if $\tau$ fixes a hyperplane pointwisely, and there is a line $U$ such that
$\tau(v)-v \in U$ for all~$v \in V$.
We will consider the identity as a ``trivial transvection''.
Any transvection has determinant and multiplier~$1$.
A {\em symplectic transvection} is a transvection in~$\Sp(V)$.
Any symplectic transvection has the form
$$ T_v[\lambda] \in \Sp(V): u \mapsto u + \lambda (u\bullet v)  v$$
with {\em direction vector} $v \in V$ and {\em parameter} $\lambda \in K$
(see e.g.\ \cite{A}, pp.~137--138).

The classification result on subgroups of general symplectic groups containing
a nontrivial trans\-vection which plays the key role in our approach is the following.

\begin{thm}\label{thm:gp}
Let $K$ be a finite field of characteristic at least~$5$ and $V$ a symplectic
$K$-vector space of dimension~$n$.
Then any subgroup~$G$ of $\GSp(V)$ which contains a nontrivial symplectic transvection
satisfies one of the following assertions:
\begin{enumerate}[1.]
\item\label{thm:gp:1} There is a proper $K$-subspace $S \subset V$ such that $G(S) = S$.
\item\label{thm:gp:2} There are mutually orthogonal nonsingular symplectic $K$-subspaces~$S_i \subset V$ with $i=1,\dots,h$
of dimension~$m$ for some $m < n$, such that
$V = \bigoplus_{i=1}^h S_i$ and for all $g \in G$ there is a permutation $\sigma_g \in \Sym_h$
(the symmetric group on $\{1,\dots,h \}$)
with $g(S_i) = S_{\sigma_g(i)}$. Moreover, the action of $G$ on the set $\{S_1, \dots, S_h\}$ thus defined is transitive.
\item\label{thm:gp:3} There is a subfield L of K such that the subgroup generated by the symplectic transvections of $G$ is conjugated (in $\GSp(V)$) to $\Sp_{n}(L)$.
\end{enumerate}
\end{thm}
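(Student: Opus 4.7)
The plan is to analyse the normal subgroup $N$ of $G$ generated by the nontrivial symplectic transvections in $G$; normality holds because the $\GSp(V)$-conjugate of a symplectic transvection $T_v[\lambda]$ is the symplectic transvection $T_{gv}[\lambda/\chi(g)]$, where $\chi(g)$ is the multiplier of $g$. I would then run a case analysis according to the $N$- and $G$-module structure of~$V$. If $G$ acts reducibly on $V$, then conclusion~\ref{thm:gp:1} is immediate, so we may assume $G$ acts irreducibly on~$V$.

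Suppose next that $N$ acts reducibly on $V$, and pick a minimal nonzero $N$-submodule $W \subsetneq V$. The set $\{gW : g \in G\}$ consists of minimal $N$-submodules (since $N$ is normal in $G$); any two of them either coincide or intersect trivially by minimality, and $G$-irreducibility of $V$ forces their sum to equal $V$. Listing the $G$-orbit as $W_1,\dots,W_h$, this gives $V = \bigoplus_{i=1}^{h} W_i$ with $h \geq 2$, common dimension $m := \dim W_i < n$, and $G$ acting transitively on the $W_i$. To match conclusion~\ref{thm:gp:2}, it remains to prove that the $W_i$ are mutually orthogonal nonsingular symplectic subspaces. The key elementary observation is this: if a symplectic transvection $T_v[\lambda]$ with $\lambda \neq 0$ preserves a subspace $U$, then either $v \in U$ or $v \in U^\perp$, as one reads off from $u \mapsto u + \lambda(u \bullet v)v$. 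Applying this to every $W_i$ simultaneously and using $\bigcap_i W_i^\perp = 0$ forces every direction vector $v$ of a transvection in $N$ to lie in exactly one $W_i$ and to be orthogonal to every $W_j$ with $j \neq i$. Let $N_i \leq N$ be the subgroup generated by the transvections of $N$ with direction vector in $W_i$. Then each $N_i$ acts trivially on every $W_j$ with $j \neq i$, the $N_i$ commute pairwise and together generate $N$, and $W_i$ is $N_i$-irreducible (the action of $N$ on $W_i$ factors through $N_i$). Hence the direction vectors in $W_i$ linearly span $W_i$; transitivity of $G$ on the $W_i$ ensures that each $N_i$ is conjugate in $G$ to the (nontrivial) $N_{i_0}$ in which a given transvection of $N$ lives, hence is itself nontrivial. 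It follows that $W_i \perp W_j$ for $i \neq j$, and the dimension count then gives $W_i^\perp = \bigoplus_{j \neq i} W_j$, so that each $W_i$ is a nonsingular symplectic subspace.

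Suppose finally that $N$ acts irreducibly on $V$. Here I would invoke the classical classification, due to Kantor (following earlier work of Dickson, Wagner and McLaughlin), of irreducible subgroups of $\Sp(V)$ generated by symplectic transvections: any such subgroup is $\GSp(V)$-conjugate to $\Sp_n(L)$ for some subfield $L \subseteq K$, up to a short list of sporadic exceptions confined to small dimensions and small characteristics. The hypothesis that $K$ has characteristic at least~$5$ rules out these exceptions and yields conclusion~\ref{thm:gp:3} for $N$, which by construction is the subgroup generated by the symplectic transvections of~$G$.

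The main obstacle is this final case: one must check that every exceptional configuration appearing in Kantor-type classifications is actually excluded by the characteristic-at-least-$5$ hypothesis, and that the resulting conjugation can be performed inside $\GSp(V)$ rather than merely inside $\GL(V)$. By contrast, the imprimitive case is essentially combinatorial linear algebra once the direction-vector observation is isolated, and the case of reducible $G$ is trivial.
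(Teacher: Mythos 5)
Your proposal is correct and follows essentially the same route as the paper: pass to the normal subgroup generated by the transvections, use the dichotomy ``$v\in U$ or $v\in U^\perp$'' for the direction vector of a transvection preserving a subspace $U$ to obtain the orthogonal decomposition when that subgroup acts reducibly, and invoke Kantor's classification when it acts irreducibly (the paper reaches the decomposition via Clifford's theorem and pairwise non-isomorphy of the summands, but this is only a bookkeeping difference). The one wrinkle is that pairwise trivial intersection plus spanning does not by itself make the sum of the $W_i$ direct; however, the mutual orthogonality you establish afterwards, combined with the nondegeneracy of the form on $V$, does yield directness, so the argument closes.
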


In Section~\ref{sec:section1} we show how this theorem can be deduced from results of Kantor~\cite{Kantor1979}. In a previous version of this article we had given a self-contained proof,
which is still available on arXiv.
For our application to Galois representations we provide the following representation
theoretic reformulation of Theorem~\ref{thm:gp}.

\begin{cor}\label{cor:rep}
Let $\ell$ be a prime at least~$5$, let $\Gamma$ be a compact topological group and
$$ \rho: \Gamma \to \GSp_{n}(\Fbar_\ell)$$
a continuous representation (for the discrete topology on $\Fbar_\ell$).
Assume that the image of $\rho$ contains a nontrivial transvection.
Then one of the following assertions holds:
\begin{enumerate}[1.]
\item\label{cor:rep:1} $\rho$ is reducible.
\item\label{cor:rep:2} There is a closed subgroup $\Gamma' \subsetneq \Gamma$ of finite index~$h \mid n$ and a
representation $\rho': \Gamma' \to \GSp_{n/h}(\Fbar_\ell)$ such that
$\rho \cong \Ind_{\Gamma'}^\Gamma (\rho')$.
\item\label{cor:rep:3} There is a finite field $L$  of characteristic $\ell$  such that the subgroup generated by
the symplectic transvections in the image of~$\rho$ is conjugated (in $\GSp_n(\overline{\mathbb{F}}_{\ell})$) to
$\Sp_{n}(L)$; in particular, the image is huge.
\end{enumerate}
\end{cor}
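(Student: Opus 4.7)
My plan is to apply Theorem~\ref{thm:gp} to $G := \rho(\Gamma)$ and translate each of its three conclusions into the corresponding assertion of the corollary. The preliminary step is a reduction to coefficients in a finite field: since $\Gamma$ is compact and $\Fbar_\ell$ carries the discrete topology, $G$ is finite, so $G \subseteq \GSp_n(K) = \GSp(V)$ for some finite subfield $K \subset \Fbar_\ell$, where $V := K^n$ is equipped with the standard symplectic form. The hypothesis on transvections is preserved by this reduction, and since $\ell \ge 5$, Theorem~\ref{thm:gp} applies to $G$.

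The outer two cases are straightforward translations. In case~\ref{thm:gp:1}, a proper $G$-stable $K$-subspace of $V$ gives, after extension of scalars, a proper $\rho(\Gamma)$-stable $\Fbar_\ell$-subspace of $\Fbar_\ell^n$, so $\rho$ is reducible. In case~\ref{thm:gp:3}, the subfield $L \subseteq K$ produced by the theorem has characteristic~$\ell$ and therefore contains $\FF_\ell$, whence $\Sp_n(\FF_\ell) \subseteq \Sp_n(L)$ and the image of $\rho$ contains a $\GSp_n(\Fbar_\ell)$-conjugate of $\Sp_n(\FF_\ell)$, i.e.\ is huge.

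The substantive case is case~\ref{thm:gp:2}. I would set $\Gamma' := \rho^{-1}(\Stab_G(S_1))$; this is closed as the preimage under the continuous map $\rho$ of a subset of the discrete finite group $G$, and it has index $h$ in $\Gamma$ by the orbit-stabilizer theorem combined with the transitivity of $G$ on $\{S_1,\dots,S_h\}$. In particular $h \mid n$ because $n = h m$, and $\Gamma' \subsetneq \Gamma$ because $h > 1$ (as $m < n$). Restriction of $\rho$ to $\Gamma'$ preserves the symplectic subspace $S_1$ and, via a symplectic basis of $S_1$, yields a representation $\rho' : \Gamma' \to \GSp_{n/h}(\Fbar_\ell)$.

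The remaining, and in my view only delicate, step is the identification $\rho \cong \Ind_{\Gamma'}^{\Gamma} \rho'$ as $\Gamma$-representations. I would pick coset representatives $\gamma_1,\dots,\gamma_h$ of $\Gamma/\Gamma'$ such that $\rho(\gamma_i)(S_1) = S_i$, which is possible by transitivity. The resulting internal decomposition $V = \bigoplus_{i=1}^{h} \rho(\gamma_i)(S_1)$, together with the fact that for each $\gamma \in \Gamma$ one has $\gamma \gamma_i \in \gamma_{\sigma(i)} \Gamma'$ (where $\sigma$ is the permutation attached to $\rho(\gamma)$ by Theorem~\ref{thm:gp}), is precisely the standard criterion realising $\rho$ as $\Ind_{\Gamma'}^{\Gamma} \rho'$. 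Since the corollary asks only for an isomorphism of representations (not of symplectic spaces), no additional compatibility with the symplectic form must be checked, and this completes the reduction.
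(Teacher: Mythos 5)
Your proposal is correct and follows essentially the same route as the paper: reduce to a finite coefficient field using compactness and discreteness, apply Theorem~\ref{thm:gp} to the image, translate cases 1 and 3 directly, and in case 2 take $\Gamma'$ to be the preimage of the stabiliser of $S_1$ and invoke the standard internal criterion for induced representations (the paper cites Proposition (10.5) of \cite{CR} for exactly this step). The only cosmetic difference is that you obtain the index $h$ via orbit--stabiliser where the paper counts cosets explicitly; the content is identical.
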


The following corollary shows that the definition of a huge subgroup
of $\GSp_{n}(\Fbar_\ell)$, which we give in Part~I~\cite{partI},
coincides with the simpler definition stated above.

\begin{cor}\label{cor:huge-equivalence}
Let $K$ be a finite field of characteristic $\ell\geq 5$,
$V$ a symplectic $K$-vector space of dimension $n$,
and $G$ a subgroup of $\GSp(V)$ which contains a
symplectic transvection. Then the following are equivalent:

\begin{enumerate}[(i)]
\item\label{cor:huge-equivalence:i} $G$ is huge.

\item\label{cor:huge-equivalence:ii} There is a subfield $L$ of $K$ such that the subgroup generated by
the symplectic transvections of $G$ is conjugated (in $\GSp(V)$) to
$\Sp_{n}(L)$.
\end{enumerate}
\end{cor}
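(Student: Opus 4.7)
The direction $(\ref{cor:huge-equivalence:ii}) \Rightarrow (\ref{cor:huge-equivalence:i})$ is essentially formal: any subfield $L$ of $K$ contains the prime field $\FF_\ell$, whence $\Sp_n(\FF_\ell) \subseteq \Sp_n(L)$. Since $\GSp(V)$ is a subgroup of $\GSp_n(\Fbar_\ell)$, conjugation inside $\GSp(V)$ is a fortiori conjugation inside $\GSp_n(\Fbar_\ell)$, so the hypothesis in~(\ref{cor:huge-equivalence:ii}) immediately forces $G$ to contain a conjugate of $\Sp_n(\FF_\ell)$, i.e.\ to be huge.

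For the substantive direction $(\ref{cor:huge-equivalence:i}) \Rightarrow (\ref{cor:huge-equivalence:ii})$ the plan is to apply Theorem~\ref{thm:gp} to~$G$, which is legitimate because $G$ contains a symplectic transvection by hypothesis, and to rule out the first two alternatives; the surviving case~\ref{thm:gp:3} is precisely statement~(\ref{cor:huge-equivalence:ii}). Throughout, fix $M \in \GSp_n(\Fbar_\ell)$ with $H := M \Sp_n(\FF_\ell) M^{-1} \subseteq G$ and exploit repeatedly that $H$ acts absolutely irreducibly on $V \otimes_K \Fbar_\ell$, since the natural representation of $\Sp_n(\FF_\ell)$ on $\FF_\ell^n$ stays irreducible after base change to $\Fbar_\ell$.

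To exclude case~\ref{thm:gp:1}, a $G$-stable nonzero proper $K$-subspace $S \subset V$ would give an $H$-stable nonzero proper $\Fbar_\ell$-subspace $S \otimes_K \Fbar_\ell$ of $V \otimes_K \Fbar_\ell$, contradicting absolute irreducibility. To exclude case~\ref{thm:gp:2}, the transitive $G$-action on the blocks $\{S_1,\dots,S_h\}$ with $h \geq 2$ restricts to a homomorphism $\phi\colon H \to \Sym_h$ whose kernel is normal in $H \cong \Sp_n(\FF_\ell)$; since $\ell \geq 5$, $\PSp_n(\FF_\ell)$ is simple, so the only normal subgroups of $\Sp_n(\FF_\ell)$ are $\{1\}$, $\{\pm I\}$ and itself. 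If $\phi$ is trivial, each $S_i$ is $H$-stable, contradicting absolute irreducibility as before; otherwise $\PSp_n(\FF_\ell)$ embeds into $\Sym_h$, which is impossible by a crude order comparison, using $h \leq n/2$ (the $S_i$, being symplectic, are even-dimensional). I expect the only friction to lie in excluding case~\ref{thm:gp:2}: it is the sole step going beyond pure representation-theoretic irreducibility, requiring both the simplicity of $\PSp_n(\FF_\ell)$ for $\ell \geq 5$ and the estimate $|\PSp_n(\FF_\ell)| > (n/2)!$.
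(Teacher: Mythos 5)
Your proposal is correct and follows the same route as the paper: apply Theorem~\ref{thm:gp} and rule out alternatives~\ref{thm:gp:1} and~\ref{thm:gp:2} using the (absolute) irreducibility of the natural $\Sp_n(\FF_\ell)$-module, with the converse being formal. The paper's proof simply asserts that a huge group fixes no proper subspace and no orthogonal decomposition; your argument via the homomorphism to $\Sym_h$, the simplicity of $\PSp_n(\FF_\ell)$ for $\ell\geq 5$, and the order comparison $|\PSp_n(\FF_\ell)|>(n/2)!$ is a valid filling-in of those omitted details.
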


Combining the group theoretic results above with $(n,p)$-groups, introduced by~\cite{KLS1},
some number theory allows us to prove the following theorem. Before stating it, let us collect some notation.

\begin{setup}\label{setup}
Let $n,N\in \NN$ be integers with $n$ even and $N=N_1\cdot N_2$ with $\gcd(N_1, N_2)=1$.
Let $L_0$ be the compositum of all number fields of degree $\leq n/2$,
which are ramified at most at the primes dividing $N_2$ (which is a number field).
Let $q$ be a prime which is completely split in $L_0$,
and let $p$ be a prime dividing $q^n-1$ but not dividing $q^{\frac{n}{2}}-1$,
and $p\equiv 1 \pmod n$.
\end{setup}

\begin{thm}\label{thm:principal} Assume Set-up~\ref{setup}. Let $k\in \mathbb{N}$, $\ell\not=p, q$ be a prime such that $\ell>kn!+1$ and $\ell\nmid N$.
Let $\chi_q:G_{\QQ_{q^n}}\rightarrow \Qbar_{\ell}^{\times}$ be a character
satisfying the assumptions of Lemma~\ref{lem:np}, and $\overline{\chi}_q$ 
the composition of $\chi_q$  with the reduction map $\Zbar_{\ell}\rightarrow \Fbar_{\ell}$. Let $\overline{\alpha}:G_{\QQ_{q}}\rightarrow \overline{\mathbb{F}}_{\ell}^{\times}$ be an unramified character.

Let
$$\rho:G_{\QQ}\rightarrow \GSp_n(\Fbar_{\ell})$$
be a Galois representation, ramified only at the primes dividing
$Nq\ell$, satisfying that a twist by some power of the cyclotomic
character is regular in the sense of Definition \ref{defi:regularity} with tame inertia weights at most $k$, and
such that (1) $\Res^{G_{\QQ}}_{G_{\QQ_q}}(\rho)=\Ind^{G_{\QQ_q}}_{G_{\QQ_{q^n}}}(\overline{\chi}_q)\otimes \overline{\alpha}$, (2)
the image of $\rho$ contains a nontrivial transvection and (3) for all primes $\ell_1$ dividing $N_1$,
the image under $\rho$ of $I_{\ell_1}$, the inertia group at $\ell_1$, has order prime to $n!$.

Then the image of $\rho$ is a huge subgroup of $\GSp_n(\Fbar_{\ell})$.
\end{thm}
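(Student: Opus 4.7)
The strategy is to apply Corollary~\ref{cor:rep} to $\rho$ (whose image contains a nontrivial transvection by hypothesis~(2)) and to rule out the first two of its three alternatives. Case~\ref{cor:rep:1} is immediate: hypothesis~(1) exhibits $\rho|_{G_{\QQ_q}}$ as an induced character twisted by $\bar{\alpha}$, and the conditions on $\chi_q$ in Lemma~\ref{lem:np} are engineered precisely so that this induction is irreducible (no nontrivial element of $\Gal(\QQ_{q^n}/\QQ_q)$ fixes $\bar{\chi}_q$); hence $\rho$ is already irreducible globally.

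The substance of the argument lies in eliminating Case~\ref{cor:rep:2}. Suppose for contradiction $\rho \cong \Ind_{G_F}^{G_\QQ}(\rho')$ for a number field $F$ with $[F:\QQ] = h \mid n$, $h > 1$. Since each summand $S_i$ of Theorem~\ref{thm:gp}(\ref{thm:gp:2}) is symplectic, its dimension $n/h$ is even, and the largest proper even divisor of $n$ is $n/2$, so $2 \leq h \leq n/2$. The induction yields a permutation homomorphism $G_\QQ \to \Sym_h$ whose kernel is $G_{\tilde F}$, with $\tilde F$ the Galois closure of $F$; a prime $v$ is unramified in $\tilde F$ exactly when $\rho(I_v)$ acts trivially on $\{S_1,\dots,S_h\}$. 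I would then show that $F$ is unramified outside the primes dividing $N_2$: at $\ell_1 \mid N_1$, hypothesis~(3) gives $\gcd(|\rho(I_{\ell_1})|, n!)=1$, while the image in $\Sym_n$ has order dividing $n!$, hence vanishes; at $q$, Lemma~\ref{lem:np} makes $\bar{\chi}_q|_{I_q}$ of order $p$ and $\bar{\alpha}$ is unramified, so $\rho(I_q)$ is cyclic of order $p > n$ and the permutation image is again trivial; at $\ell$, wild inertia is killed in $\Sym_n$ (as $\ell > n$), so only tame inertia could permute the $S_i$, and a tame ramification of $F/\QQ$ at $\ell$ of index $e \mid h$ would contribute a tame character of order $e$ to $\rho|_{I_\ell}^{\mathrm{ss}}$ with tame inertia weight at least $(\ell-1)/e \geq (\ell-1)/n!$, exceeding $k$ under the hypothesis $\ell > kn!+1$ and contradicting the assumed weight bound on $\rho$.

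Consequently $F$ is a number field of degree $\leq n/2$ ramified only at primes dividing $N_2$, so Set-up~\ref{setup} places $F \subseteq L_0$ and forces $q$ to split completely in $F$. On the other hand, the Mackey decomposition
$$\rho|_{G_{\QQ_q}} \cong \bigoplus_{v \mid q} \Ind_{G_{F_v}}^{G_{\QQ_q}}\!\bigl(\rho'|_{G_{F_v}}\bigr),$$
together with the irreducibility of the left-hand side proved while eliminating Case~\ref{cor:rep:1}, forces exactly one place of $F$ above $q$, which contradicts the $h > 1$ places supplied by complete splitting. Only Case~\ref{cor:rep:3} survives, and this is precisely the assertion that the image of $\rho$ is huge.

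The principal obstacle is the ramification check at $\ell$: translating the tame-inertia-weight bound on $\rho$ and the numerical condition $\ell > kn!+1$ into unramifiedness of $F$ at $\ell$ requires careful bookkeeping with fundamental characters and with the form of $\rho|_{I_\ell}$ inherited from the induced structure. The other steps are essentially order-coprimality arguments in $\Sym_n$, made possible by the way Set-up~\ref{setup} chose $q$ to split completely in $L_0$.
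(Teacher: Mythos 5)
Your overall architecture matches the paper's: apply Corollary~\ref{cor:rep}, kill reducibility via the irreducibility of $\Ind^{G_{\QQ_q}}_{G_{\QQ_{q^n}}}(\overline{\chi}_q)\otimes\overline{\alpha}$ (Lemma~\ref{lem:np}), and concentrate on excluding the induced case. But your elimination of the induced case is genuinely different in its arithmetic. The paper (Lemma~\ref{lem:not_induced}) also starts from the one-Mackey-summand observation, but then invokes Proposition~\ref{prop:nh} (``representations induced in two ways'', with the auxiliary Lemma~\ref{lem:ResCharacters}) to deduce $G_{\QQ_{q^n}}\le G_{\QQ_q}\cap G_F$; this gives both unramifiedness of $F$ at $q$ and, via a dimension count, that $q$ is \emph{inert} in $F$, which then clashes with complete splitting in $L_0$. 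You instead get unramifiedness at $q$ from the cheap observation that $\rho(I_q)$ is a $p$-group with $p>n\ge h$, so it acts trivially on $\{S_1,\dots,S_h\}$; and you get the final contradiction by counting: one Mackey summand means one place of $F$ above $q$, while complete splitting in $L_0\supseteq F$ gives $h>1$ places. This bypasses Proposition~\ref{prop:nh} entirely and is, as far as I can see, correct and shorter; the price is that the paper's route also yields the more precise structural statement $L_{\mathfrak q}\subseteq\QQ_{q^n}$, which is reused elsewhere in the series. Your treatment of the primes $\ell_1\mid N_1$ coincides with the paper's.

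The one step that does not hold up as literally written is the ramification check at $\ell$ (the paper's Proposition~\ref{prop:RamAtEll}). Tame ramification of index $e$ in $F_v/\QQ_\ell$ does not ``contribute a character of order $e$'' to $\rho|_{I_\ell}^{\mathrm{ss}}$; what it contributes is a \emph{pair} of Jordan--H\"older constituents $\tilde\psi$ and $\tilde\psi\eta$ differing by a character $\eta$ of order dividing $e$ (the induction from the index-$e$ subgroup of the procyclic tame quotient splits into the $e$ extensions of $\psi$). Moreover $e$ need not divide $\ell-1$, so $\eta$ may have niveau $>1$ and the bound ``weight at least $(\ell-1)/e$'' is not meaningful as stated; and in any case tame inertia weights of a product of characters are not additive, so you cannot transfer a lower bound on the weight of $\eta$ to a constituent of $\rho$. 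The correct way to close this step --- and it is exactly what the paper does --- is to note that $e\mid h!\mid n!$, so $\tilde\psi^{\,n!}=(\tilde\psi\eta)^{n!}$, and then invoke Lemma~\ref{lem:technical}: under $\ell>kn!+1$ and the regularity/weight hypothesis, the $n!$-th powers of the diagonal characters of $\chi_\ell^a\otimes\rho|_{I_\ell}$ are pairwise distinct. You correctly flagged this as the delicate point and the numerical hypothesis you cite is the right one, but the inequality you propose is not the argument that works; you need the ``equal $n!$-th powers'' contradiction instead.
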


Combining Theorem~\ref{thm:principal} with the results of Part~I~\cite{partI} of this series
yields the following corollary.

\begin{cor}\label{cor:principal} Assume Set-up~\ref{setup}.
Let $\rho_\bullet = (\rho_\lambda)_\lambda$ (where $\lambda$ runs through the finite places of a number field $L$) be an $n$-dimensional a.~e. absolutely irreducible a.~e.
symplectic compatible system, as defined in Part~I (\cite{partI}), for the
base field~$\QQ$, which satisfies the following assumptions:
\begin{itemize}
\item For all places~$\lambda$ the representation $\rho_\lambda$ is unramified outside $Nq\ell$,
where $\ell$ is the rational prime below~$\lambda$.
\item There are $a \in \ZZ$ and $k \in \NN$ such that, for all but possibly finitely many places $\lambda$ of $L$, the reduction mod $\lambda$ of $\chi_{\ell}^a\otimes \rho_{\lambda}$ is regular in the sense of Definition \ref{defi:regularity}, with tame inertia weights at most~$k$.
\item The multiplier of the system is a finite order character times a power of the
cyclotomic character.
\item For all primes $\ell$ not belonging to a density zero set of rational primes, and for each $\lambda\vert \ell$,
the residual representation $\overline{\rho}_\lambda$ contains a nontrivial transvection
in its image.
\item For all places $\lambda$ not above~$q$ one has $\Res^{G_{\QQ}}_{G_{\QQ_q}}(\rho_\lambda)=\Ind^{G_{\QQ_q}}_{G_{\QQ_{q^n}}}(\chi_q)\otimes \alpha$,
where $\alpha: G_{\QQ_q}\rightarrow \overline{L}_\lambda^{\times}$ is some unramified character
and $\chi_q:G_{\QQ_{q^n}}\rightarrow \Zbar^{\times}$ is a character
such that its composite with the embedding $\Zbar^\times \hookrightarrow \Qbar_\ell^\times$ given by $\lambda$ satisfies the assumptions of Lemma~\ref{lem:np} for all primes~$\ell \nmid pq$.
In the terminology of Part~I, $q$ is called a maximally induced place of order~$p$.

\item For all primes $\ell_1$ dividing $N_1$ and for all but possibly finitely many places~$\lambda$,
the group $\overline{\rho}_\lambda (I_{\ell_1})$ has order prime to $n!$ (where $I_{\ell_1}$ denotes the inertia group at $\ell_1$).
\end{itemize}

\noindent Then we obtain:
\begin{enumerate}[(a)]
\item\label{cor:principal:a} For all primes $\ell$ not belonging to a density zero set of rational primes, and for each $\lambda\vert \ell$, the image of
the residual representation $\overline{\rho}_\lambda$ is a huge subgroup of $\GSp_n(\Fbar_{\ell})$.
\item\label{cor:principal:b} For any $d \mid \frac{p-1}{n}$ there exists a set~$\calL_d$ of rational primes~$\ell$ of positive density such that for all $\ell \in \calL_d$ there is a place $\lambda$ of~$L$
above~$\ell$ satisfying that the image of $\overline{\rho}_\lambda^\proj$
is $\PGSp_n(\FF_{\ell^d})$ or $\PSp_n(\FF_{\ell^d})$.
\end{enumerate}
\end{cor}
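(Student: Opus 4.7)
The plan is twofold: part (a) follows essentially by a direct application of Theorem~\ref{thm:principal} to each residual representation $\overline{\rho}_\lambda$, while part (b) combines (a) with the main result of Part~I~\cite{partI}, whose role is to upgrade ``huge residual image'' to an exact determination of the projective image.

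For part (a), I would match the hypotheses of Theorem~\ref{thm:principal} term by term against the data given on the compatible system $(\rho_\lambda)_\lambda$. The conditions $\ell \ne p,q$, $\ell > kn!+1$, and $\ell \nmid N$ each exclude only finitely many rational primes; the ramification and regularity hypotheses on the compatible system, together with the inertial-order condition at primes dividing~$N_1$, hold for all but finitely many places~$\lambda$ by assumption. The local shape at~$q$ reduces correctly modulo~$\lambda$, since induction commutes with reduction and the unramified character $\alpha$ reduces to~$\overline{\alpha}$. Finally, the transvection hypothesis holds outside a density zero set of rational primes. Theorem~\ref{thm:principal} therefore applies and gives huge image for $\overline{\rho}_\lambda$ on a set of primes $\ell$ of density one, proving (a).

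For part (b), fix a divisor $d \mid \tfrac{p-1}{n}$ and start from the density-one set provided by (a). The key point is that, once $\overline{\rho}_\lambda$ has huge image, the main theorem of Part~I~\cite{partI} combined with the explicit local structure at the maximally induced place~$q$ pins down the exact subfield of~$\Fbar_\ell$ over which the projective image is defined: this field is dictated by the residue field of $\overline{\chi}_q$, which in turn is controlled by the multiplicative order of~$\ell$ modulo the conductor of $\chi_q$ (essentially modulo~$p$). I would then apply the Chebotarev density theorem to a suitable finite extension of~$\QQ$ containing $\QQ(\zeta_p)$ (and absorbing the auxiliary cyclotomic field needed to control the multiplier) to produce a positive-density set $\calL_d$ of primes~$\ell$ whose Frobenius class forces the descent degree to equal exactly~$d$. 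Intersecting $\calL_d$ with the density-one set from (a) still yields positive density, and Part~I's refinement then identifies the projective image at some $\lambda\mid \ell$ with $\PSp_n(\FF_{\ell^d})$ or $\PGSp_n(\FF_{\ell^d})$.

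The main obstacle is not (a), which is nearly mechanical given Theorem~\ref{thm:principal}, but rather the Chebotarev step in (b): one has to identify precisely the finite extension of $\QQ$ whose Frobenius conjugacy classes encode the descent degree, and verify that for \emph{every} $d \mid \tfrac{p-1}{n}$ the relevant set of classes is non-empty, so that Chebotarev yields positive (rather than zero) density. Once this is established, combining the output of (a) with the refinement from Part~I produces the stated projective image, completing the proof of the corollary.
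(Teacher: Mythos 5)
Your proposal is correct in substance and follows the same overall structure as the paper: part (a) is a term-by-term verification of the hypotheses of Theorem~\ref{thm:principal} applied to each $\overline{\rho}_\lambda$ (discarding the finitely many $\ell$ with $\ell=p$, $\ell=q$, $\ell\le kn!+1$ or $\ell\mid N$, and the density-zero exceptional sets from the transvection, regularity and inertia hypotheses), and part (b) rests on the main theorem of Part~I. The one place where your account diverges from the paper is in how much work you assign to this proof for part (b). The Chebotarev step you single out as ``the main obstacle'' --- identifying the finite extension whose Frobenius classes encode the descent degree $d$, checking non-emptiness of the relevant classes for every $d\mid\frac{p-1}{n}$, and producing the positive-density set $\calL_d$ --- is not carried out here at all: it is precisely the content of the main theorem of Part~I, which is invoked as a black box. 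The only inputs that theorem needs and that must be checked in this corollary are (i) that $q$ is a maximally induced place of order~$p$ for the system (immediate from the fifth bullet hypothesis) and (ii) that the residual images are huge outside a density-zero set of primes, which is exactly part (a). So your argument is not wrong, but it partially re-proves the cited result; recognizing that the descent-degree/Chebotarev analysis lives entirely in Part~I would collapse your (b) to the two-line verification the paper actually gives.
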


The proofs of Theorem~\ref{thm:principal} and Corollary \ref{cor:principal}
are given in Section~\ref{sec:section2}.

\subsection*{Acknowledgements}

S.~A.-d.-R.\ worked on this article as a fellow of the Alexander-von-Humboldt foundation.
She thanks the Universit\'e du Luxembourg for its hospitality during a long term visit in 2011.
She was also partially supported by the project MTM2012-33830 of the Ministerio de Econom\'ia y Competitividad of Spain.
L.~V.~D. was supported by the project MTM2012-33830 of the Ministerio de Econom\'ia y Competitividad of Spain and by an ICREA Academia Research Prize.
G.~W.\ was partially supported by the DFG collaborative research centre TRR~45,
the DFG priority program~1489 and the Fonds National de la Recherche Luxembourg (INTER/DFG/12/10).
S.~A.-d.-R. and G.~W. thank the Centre de Recerca Matem\`{a}tica  for its support and hospitality during a long term visit in 2010.

The authors thank the anonymous referee for his suggestions on Section~\ref{sec:section1}.
S.~A.-d.-R.\ and G.~W.\ thank Gunter Malle for his detailed explanations concerning Kantor's paper~\cite{Kantor1979}.

\section{Symplectic representations containing a transvection}\label{sec:section1}

In this section we deduce Theorem \ref{thm:gp} from the results of Kantor \cite{Kantor1979}, together with some representation theory of groups. Throughout the section our setting will be the following: $\ell\geq 5$ denotes a prime number,  $n$ an even positive integer and $V$ a symplectic $n$-dimensional vector space over a finite field $K$ of characteristic $\ell$.

\subsection{Kantor's classification result}

In his paper \cite{Kantor1979}, Kantor classifies subgroups of classical linear groups which are generated by a conjugacy class of elements of long root subgroups. In this paper we are only concerned with subgroups of the symplectic group $\Sp(V)$. This case is addressed in $\S 11$ of~\cite{Kantor1979}.

We need some notation in order to state his result.
First of all, recall that in the symplectic case, the elements of long root subgroups are precisely the symplectic transvections. Given a subgroup $H\subseteq \Sp(V)$, denote by $O_{\ell}(H)$ the maximal normal $\ell$-subgroup contained in $H$, denote by $[H, H]$ the commutator subgroup of $H$, and by $Z_{\Sp(V)}(H)$ the centraliser of $H$ in $\Sp(V)$. Below we state the result of Kantor in the symplectic case (and leaving aside the cases of characteristic $2$ and $3$).

\begin{thm}[Kantor]\label{thm:Kantor}
Assume that $\ell\geq 5$, and let $H\subseteq \Sp(V)$ be a subgroup satisfying the following conditions:
\begin{enumerate}
 \item\label{cond:1} There exists a set $\mathfrak{X}\subseteq H$ consisting of transvections, closed under conjugation in $H$, which generates $H$.
 \item\label{cond:2} $O_{\ell}(H)\leq [H, H]\cap Z_{\Sp(V)}(H)$.
 \item\label{cond:3} $H$ does not preserve any nonsingular subspace of~$V$.
\end{enumerate}
Then there is a subfield $L$ of $K$ such that $H$ is conjugated (in $\Sp(V)$) to $\Sp_n(L)$.
\end{thm}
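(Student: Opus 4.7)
The plan is to follow the McLaughlin--Kantor strategy for classifying groups generated by a conjugacy class of long root elements in a classical group, adapted to the symplectic setting, treating $\mathfrak{X}$ itself as the primary data. First, associate to $\mathfrak{X}$ a geometry on $\mathbb{P}(V)$: each transvection $T_v[\lambda] \in \mathfrak{X}$ has a well-defined direction line $\langle v \rangle$, and the set $\Sigma \subseteq \mathbb{P}(V)$ of such lines is $H$-stable by condition~(\ref{cond:1}). For each point $P \in \Sigma$, the subgroup $X_P \leq H$ generated by the transvections in $\mathfrak{X}$ with direction $P$ is elementary abelian and plays the role of a root subgroup at $P$.

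The two-generator commutator structure is the central tool: two transvections $T_v[\lambda]$ and $T_w[\mu]$ commute precisely when $v \bullet w = 0$, while if $v \bullet w \neq 0$ their commutator expands into new transvections supported in the plane $\langle v,w \rangle$ together with a Weyl-like element generating an $\SL_2$-subgroup acting faithfully on that plane. This equips $\Sigma$ with a polar-space-like incidence structure in which collinearity is exactly perpendicularity with respect to the symplectic form. Conditions~(\ref{cond:2}) and~(\ref{cond:3}) serve to exclude degenerate configurations: (\ref{cond:3}) forces $\Sigma$ to span all of $V$, since otherwise the $H$-invariant subspace spanned by $\Sigma$, together with its radical, would yield a proper nonsingular $H$-invariant subspace; and (\ref{cond:2}) rules out the possibility that all transvections of $\mathfrak{X}$ share a common fixed hyperplane, because such a configuration would produce a normal elementary abelian $\ell$-subgroup of $H$ too large to be contained in $[H,H]\cap Z_{\Sp(V)}(H)$.

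Next one would extract the subfield. For a non-commuting pair of transvections in $\mathfrak{X}$, the $\SL_2$-subgroup they generate is defined over a subring of $K$; iterating commutators and $H$-conjugations closes the collection of scalars so obtained under addition and multiplication, producing a subfield $L \subseteq K$. A straightforward induction on the rank then builds a full $\Sp_n(L)$-subgroup inside $H$, and the reverse inclusion holds because any extra transvection in $H$ would enlarge the collection of scalars and thus the field, contradicting the maximal choice of~$L$. After suitable conjugation one obtains the standard embedding $H = \Sp_n(L) \subseteq \Sp(V)$.

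I expect the hardest part to be the extraction and verification of the subfield $L$ together with the ruling out of sporadic configurations. This is where the hypothesis $\ell\geq 5$ becomes essential: in characteristics~$2$ and~$3$ there exist genuinely exceptional subgroups of $\Sp(V)$ (orthogonal subgroups sitting inside $\Sp$, and sporadic generation patterns in small rank) which no generic commutator argument can eliminate and which must be treated case by case. Rather than reproduce the lengthy analysis carried out in~\cite{Kantor1979}, in practice we would simply defer to Kantor's general classification of groups generated by long root subgroups and specialise it to the symplectic case with $\mathfrak{X}$ the class of transvections; hypotheses~(\ref{cond:1})--(\ref{cond:3}) are then precisely what is required to land in the ``generic'' conclusion of loc.\ cit., namely $H \cong \Sp_n(L)$ for a subfield $L \subseteq K$.
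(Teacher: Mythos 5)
Your proposal is correct and takes the same route as the paper: this statement is simply Kantor's classification of subgroups generated by long root elements, specialised to the symplectic case in characteristic $\geq 5$, and the paper gives no independent proof but cites $\S 11$ of \cite{Kantor1979} directly, exactly as your final paragraph does. Your preliminary sketch of the McLaughlin--Kantor strategy is a reasonable heuristic gloss but is not needed (and is not supplied) in the paper.
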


We will apply this result in the case when $H$ is an irreducible subgroup. In this case, Conditions \ref{cond:2} and \ref{cond:3} are satisfied. We elaborate on Condition \ref{cond:2}. Let $W\subseteq V$ be the subspace of elements that are left invariant by all elements in $O_{\ell}(H)$. Since $O_{\ell}(H)$ is an $\ell$-group acting on a finite $\ell$-group $V$, the cardinality of $W$ is divisible by $\ell$ (cf.~Lemma 1 of Chapter IX of \cite{LocalFields}), hence $W\not=\{0\}$. Moreover, since $O_{\ell}(H)$ is a normal subgroup of $H$, it follows that $H$ stabilises $W$. But $H$ is an irreducible group, hence $W=V$ and $O_{\ell}(H)=\{\mathrm{Id}\}$.
Furthermore, if we take into account that the conjugate of a transvection is again a transvection, we can reformulate Condition \ref{cond:1} as follows: ``the transvections contained in $H$ generate $H$'', or simply ``$H$ is generated by transvections''. This discussion proves the following corollary.

\begin{cor}\label{cor:Kantor}
Assume that $\ell\geq 5$, and let $H\subseteq \Sp(V)$ be an irreducible subgroup which is generated by transvections. Then there is a subfield $L$ of $K$ such that $H$ is conjugated (in $\Sp(V)$) to $\Sp_n(L)$.
\end{cor}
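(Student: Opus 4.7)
The plan is to apply Theorem~\ref{thm:Kantor} (Kantor's classification) to the subgroup $H$, so I need to verify that its three hypotheses hold under the assumption that $H$ is irreducible and generated by transvections. The text preceding the corollary already sketches most of the verification; my proposal is simply to organise this as a short deduction.

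First I would handle Condition~\ref{cond:1}. Since any conjugate in $\Sp(V)$ of a transvection is again a transvection, the set $\mathfrak{X} \subseteq H$ of \emph{all} transvections lying in $H$ is stable under $H$-conjugation. By hypothesis $H$ is generated by its transvections, so this set $\mathfrak{X}$ generates $H$, which is exactly Condition~\ref{cond:1}.

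Next I would handle Condition~\ref{cond:3}, which follows immediately: if $H$ preserved a nonsingular (proper, nonzero) subspace $W \subset V$, then $W$ would be an $H$-invariant subspace contradicting irreducibility.

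The main step, and the one where I expect the slight subtlety to lie, is Condition~\ref{cond:2}. I would prove the stronger statement $O_\ell(H) = \{\Id\}$, from which Condition~\ref{cond:2} follows trivially. Let $W := V^{O_\ell(H)}$ be the subspace of vectors fixed by every element of $O_\ell(H)$. Viewing $V$ as an elementary abelian $\ell$-group on which the $\ell$-group $O_\ell(H)$ acts, the usual orbit-counting argument (as used in \cite{LocalFields}, Ch.~IX, Lemma~1) forces $|W| \equiv |V| \equiv 0 \pmod{\ell}$, so $W \neq 0$. On the other hand, since $O_\ell(H)$ is normal in $H$, the subspace $W$ is $H$-stable. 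By irreducibility of $H$ on $V$, we must have $W = V$, i.e.\ every element of $O_\ell(H)$ acts trivially, so $O_\ell(H) = \{\Id\}$. In particular, $O_\ell(H) \leq [H,H] \cap Z_{\Sp(V)}(H)$, giving Condition~\ref{cond:2}.

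With all three hypotheses of Theorem~\ref{thm:Kantor} verified, its conclusion yields a subfield $L \subseteq K$ and an element $g \in \Sp(V)$ conjugating $H$ onto $\Sp_n(L)$, which is exactly the claim of the corollary. The only non-routine point is the orbit argument ensuring $W \neq 0$; everything else is a direct translation of the hypotheses.
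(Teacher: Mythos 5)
Your proposal is correct and matches the paper's own argument essentially verbatim: the paper verifies Condition~\ref{cond:1} via the conjugation-stability of the set of transvections in $H$, Condition~\ref{cond:3} via irreducibility, and Condition~\ref{cond:2} by showing $O_\ell(H)=\{\Id\}$ using the same fixed-point count for an $\ell$-group acting on the finite $\ell$-group $V$ together with normality of $O_\ell(H)$ and irreducibility. Nothing is missing.
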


\subsection{Proof of the group theoretic results}

We will make use of the following facts about transvections, the simple proofs
of which are omitted.

\begin{lem}\label{lem:aux}
Let $T_u[\lambda]\in \Sp(V)$  be a symplectic transvection. Then 
\begin{enumerate}[(a)]
\item\label{lem:aux:a} For any $A\in \GSp(V)$ with multiplier $\alpha\in K^{\times}$, $AT_u[\lambda]A^{-1}=T_{Au}[\frac{\lambda}{\alpha}]$.
\item\label{lem:aux:b}  Suppose $W\subseteq V$ is a $K$-vector subspace stabilised by $T_u[\lambda]$
with $\lambda \in K^\times$. Then we have
\begin{enumerate}[(1)]
\item\label{lem:aux:b:1} $u\in W$ or $u \in W^{\perp}$;
\item\label{lem:aux:b:2} $u \in W^{\perp} \Leftrightarrow T_u[\lambda]|_W = \id_W$.
\end{enumerate}
\end{enumerate}
\end{lem}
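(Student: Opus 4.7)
The plan is to prove both parts directly from the explicit formula $T_u[\lambda](x) = x + \lambda(x \bullet u)\,u$ together with the defining property $(Ax)\bullet(Ay) = \alpha(x\bullet y)$ of the multiplier $\alpha$ of $A \in \GSp(V)$.

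For part~(a), I would compute $AT_u[\lambda]A^{-1}(v)$ by unwinding the formula: apply $A^{-1}$ to $v$, then the transvection, then~$A$. This directly gives $v + \lambda(A^{-1}v \bullet u)\,Au$. The only extra step is to rewrite the scalar $A^{-1}v \bullet u$ using the multiplier identity with $x = A^{-1}v$ and $y = u$, which yields $A^{-1}v \bullet u = \alpha^{-1}(v \bullet Au)$. Substituting back, the expression becomes $v + \frac{\lambda}{\alpha}(v\bullet Au)\,Au = T_{Au}[\lambda/\alpha](v)$, as required. This is a one-line computation with no real obstacle.

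For part~(b)(1), I would assume $T_u[\lambda]$ stabilises $W$ with $\lambda \neq 0$ and, for each $w \in W$, consider the vector $T_u[\lambda](w) - w = \lambda(w\bullet u)\,u$, which by hypothesis lies in~$W$. A case split then finishes the job: either $w \bullet u = 0$ for every $w \in W$, in which case $u \in W^\perp$; or some $w_0 \in W$ satisfies $w_0 \bullet u \neq 0$, in which case $\lambda(w_0\bullet u)\,u$ is a nonzero scalar multiple of~$u$ lying in~$W$, forcing $u \in W$. For part~(b)(2), the condition $T_u[\lambda]|_W = \id_W$ unfolds to $\lambda(w\bullet u)\,u = 0$ for all $w \in W$; since $\lambda \neq 0$, this holds trivially if $u = 0$ and otherwise is equivalent to $w \bullet u = 0$ for all $w \in W$, i.e.\ $u \in W^\perp$, establishing the stated equivalence.

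The whole lemma is mechanical; the only point to keep track of is the dichotomy in~(b)(1), which is resolved by the observation above that $\lambda(w_0\bullet u)\,u \in W$ forces $u$ itself into~$W$ as soon as the pairing is nonzero. No deeper input than the definitions of symplectic transvection and similitude multiplier is needed.
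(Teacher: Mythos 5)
Your proof is correct and is exactly the routine verification the authors had in mind: the paper explicitly omits the proof of this lemma as ``simple'', and your direct computation from the formula $T_u[\lambda](x)=x+\lambda(x\bullet u)\,u$ and the multiplier identity, together with the dichotomy argument in (b)(1), fills it in completely.
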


\begin{proof}[Proof of Theorem~\ref{thm:gp}]
Let $G\subseteq \GSp(V)$ be a subgroup which contains a nontrivial transvection.
If the action of $G$ on $V$ is reducible, we are in case~\ref{thm:gp:1} of the theorem.
Assume that the action of $G$ on $V$ is irreducible, and define the subgroup
$H:=\langle \tau\in G: \tau\text{ is a transvection}\rangle$. Note that $H$ is nontrivial.
If the action of $H$ on $V$ is irreducible, then we can apply Corollary~\ref{cor:Kantor} 
to the group $H$ and conclude that $H$ is conjugate in $\GSp(V)$ to $\Sp_n(L)$ for 
some subfield $L\subseteq K$. This is case~\ref{thm:gp:3} of the theorem.

Assume then that the action of $H$ on $V$ is reducible. Let $W\subset V$ be a $K$-vector
subspace on which $H$ acts irreducibly. By Lemma~\ref{lem:aux}(\ref{lem:aux:a}),
the group $H$ is a normal subgroup of~$G$.
Thus we can apply Clifford's Theorem (cf.~\cite{CR}, (11.1)), to obtain
$g_1, \dots, g_r\in G$ such that we have the equality of $H$-modules
\begin{equation}\label{eq1}
 V=\bigoplus_{i=1}^r g_iW.
\end{equation}

We first remark that $W$ is not the trivial $H$-module, as otherwise $H$ would act trivially
on~$V$ and thus $H$ would be the trivial group.
Now consider $W' = \langle u \in W : \exists \lambda \in K^\times: T_u[\lambda] \in H\rangle$.
As $W$ is a nontrivial $H$-module, $W' \neq 0$. Let $T_v[\mu] \in H$ and $u \in W'$.
By Lemma~\ref{lem:aux}(\ref{lem:aux:b}), $v \in W'$ or $v \in W^\perp$. In both
cases we have $T_v[\mu](u)=u+\mu (u\bullet v)v \in W'$, showing that $H$ preserves~$W'$,
so that the irreducibility of~$W$ implies $W'=W$.

Let $\tilde{W}=gW$ be a conjugate of~$W$ for which we assume $\tilde{W}\neq W$,
so that $\tilde{W}\cap W=0$ by the irreducibility.
We have just seen that there are $w_1,\dots,w_m \in W$ spanning~$W$ and
$\lambda_1,\dots,\lambda_m \in K^\times$ such that
$T_{w_1}[\lambda_1],\dots,T_{w_m}[\lambda_m] \in H$.
As $H$ also preserves $\tilde{W}$, Lemma~\ref{lem:aux}(\ref{lem:aux:b})
shows $w_i \in \tilde{W}^\perp$ for $1 \le i \le m$.
This proves two things. Firstly, $W \subseteq \tilde{W}^\perp$ and this means that the decomposition~\eqref{eq1} of~$V$ is into mutually orthogonal spaces. From this it follows
that these subspaces are also symplectic, i.e.\ that the pairing is nondegenerate on each subspace.
Secondly, $T_{w_1}[\lambda_1]$ is the identity on~$\tilde{W}$, but it is nontrivial on~$W$
(e.g.\ by the nondegeneration of $W$ there is $u \in W$ such that $u \bullet w_1 \neq 0$,
whence $T_{w_1}[\lambda_1](u) \neq u$). Hence, $W$ and $\tilde{W}$ are nonisomorphic as $H$-modules.

Considering the composite maps
$gW \hookrightarrow V \xrightarrow{\textnormal{projection}}g_iW$,
in view of the irreducibility of the $g_iW$ and the fact $g_iW \not\cong g_jW$ for
$i\neq j$, it follows that $gW$ is one of the~$g_iW$.
Thus, $G$ acts on the set $\{g_1W,\dots,g_rW\}$. If this action
were not transitive, then the sum of the spaces in one orbit would be a proper nontrivial
$G$-submodule of~$V$, contradicting the irreducibility of~$V$.
Thus, all statements of case~\ref{thm:gp:2} of the theorem are proved.
\end{proof}

\begin{proof}[Proof of Corollary~\ref{cor:rep}.]
Since $\Gamma$ is compact and the topology on~$\Fbar_\ell$ is discrete,
the image of $\rho$ is a subgroup of $\GSp_{n}(K)$ for a certain finite field~$K$
of characteristic~$\ell$.
Therefore one of the three possibilities of Theorem~\ref{thm:gp} holds for
$G:=\Image(\rho)$. If the first holds, then $\rho$ is reducible, and if the third holds,
then $\Image(\rho)$ contains a group conjugate to $\Sp_{n}(L)$ for some subfield $L$ of~$K$.

Assume now that the second possibility holds. We use notation as in Theorem~\ref{thm:gp}.
Let $\Gamma'$ be $\{g \in \Gamma \;|\; \sigma_g(1)=1\}$, the stabiliser of the first subspace.
This is a closed subgroup of~$\Gamma$ of finite index.
Choose coset representatives and write $\Gamma = \bigsqcup_{i=1}^{h'} g_i \Gamma'$.
The set $\{\gamma S_1 \;|\; \gamma \in \Gamma\}$ contains $h'$ elements, namely precisely the
$g_i S_1$ for $i=1,\dots,h'$. As the action of $G$ on the decomposition is transitive,
this set is precisely $\{S_1,\dots,S_h\}$, whence $h=h'$.
Define $\rho'$ as the restriction of~$\rho$ to~$\Gamma'$ acting on~$S_1$.
Then as $\Gamma'$-representation we have the isomorphism
$$ V  \cong \bigoplus_{i=1}^h S_i \cong \bigoplus_{i=1}^h g_i S_1.$$
Proposition (10.5) of $\S 10A$ of \cite{CR} implies that $\rho=\Ind^{\Gamma}_{\Gamma'}(\rho')$.
\end{proof}

\begin{proof}[Proof of Corollary~\ref{cor:huge-equivalence}.]
Assume that $G$ contains a subgroup conjugate (in
$\GSp(V)$) to $\Sp_{n}(\FF_{\ell})$. In particular, $G$ does not fix
any proper subspace $S\subset V$, nor any decomposition
$V=\bigoplus_{i=1}^h S_i$ into mutually orthogonal nonsingular
symplectic subspaces. Hence by Theorem~\ref{thm:gp} there is a
subfield L of K such that the subgroup generated by the symplectic
transvections of $G$ is conjugated (in $\GSp(V)$) to $\Sp_{n}(L)$.
The other implication is clear.
\end{proof}

\section{Symplectic representations with huge image}\label{sec:section2}

In this section we establish Theorem~\ref{thm:principal}.

\subsection{$(n,p)$-groups}

As a generalisation of dihedral groups, in \cite{KLS1}, Khare, Larsen and
Savin introduce so-called $(n, p)$-groups.
We briefly recall some facts and some notation to be used.
For the definition of $(n, p)$-groups we refer to~\cite{KLS1}.
Let $q$ be a prime number, and let $\QQ_{q^n}/\QQ_q$ be the unique unramified extension
of $\QQ_q$ of degree $n$ (inside a fixed algebraic closure $\Qbar_q$).
Assume $p$ is a prime such that the order of $q$ modulo
$p$ is~$n$. Recall that $\QQ_{q^n}^{\times}\simeq\mu_{q^n-1}\times U_1\times q^{\ZZ}$,
where $\mu_{q^n-1}$ is the group of $(q^n-1)$-th roots of unity and $U_1$ the group
of $1$-units.
Let $\ell$ be a prime distinct from $p$ and~$q$.
Assuming that $p, q>n$, in \cite{KLS1} the authors construct a character $\chi_q:\QQ_{q^n}^{\times}\rightarrow \overline{\QQ}_{\ell}^{\times}$
that satisfies the three properties of the following lemma, which
is proved in \cite{KLS1}, Section~3.1.

\begin{lem}\label{lem:np}
Let $\chi_q:\QQ_{q^n}^{\times}\rightarrow \overline{\QQ}_{\ell}^{\times}$ be a character
satisfying:
\begin{itemize}
\item $\chi_q$ has order $2p$.
\item $\chi_q\vert_{\mu_{q^n-1}\times U_1}$ has order $p$.
\item $\chi_q(q)=-1$.
\end{itemize}
This character gives rise to a character (which by abuse of notation we call also $\chi_q$) of $G_{\QQ_{q^n}}$ by means of the reciprocity map of local class field theory.

Let $\rho_q=\Ind^{G_{\QQ_q}}_{G_{\QQ_{q^n}}}(\chi_q)$.
Then $\rho_q$ is irreducible and symplectic, in the sense that it can be conjugated to take values in $\Sp_n(\Qbar_{\ell})$,
and the image of the reduction $\overline{\rho}_q$ of $\rho_q$ in $\Sp_{n}(\Fbar_{\ell})$ is an $(n, p)$-group. Moreover, if $\overline{\alpha}:G_{\QQ_q}\rightarrow \overline{\mathbb{F}}_{\ell}^{\times}$ is an unramified character, then $\overline{\rho}_q\otimes \overline{\alpha}$ is also irreducible.
\end{lem}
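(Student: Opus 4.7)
The plan is to analyse the induced representation $\rho_q = \Ind_H^G \chi_q$, where $H = G_{\QQ_{q^n}}$ and $G = G_{\QQ_q}$, in an explicit basis. Fix a lift $\phi \in G$ of the Frobenius generating $G/H \cong \ZZ/n\ZZ$, and use $\{1, \phi, \dots, \phi^{n-1}\}$ as coset representatives to obtain a basis $e_0, \dots, e_{n-1}$ on which $h \in H$ acts by the character $\chi_q^{\phi^i}: h \mapsto \chi_q(\phi^{-i} h \phi^i)$ on $e_i$, while $\phi \cdot e_i = e_{i+1}$ for $i < n-1$ and $\phi \cdot e_{n-1} = \chi_q(\phi^n)\, e_0$. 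Local class field theory identifies conjugation by $\phi$ on the inertia subgroup of $H$ with the Frobenius action $\zeta \mapsto \zeta^q$ on $\mu_{q^n-1}$, and sends $\phi^n$ to the uniformiser $q$, so $\chi_q(\phi^n) = -1$. For irreducibility, since $H$ is normal in $G$ with cyclic quotient, Mackey's criterion reduces the question to the pairwise distinctness of the $n$ characters $\chi_q^{\phi^i}|_{\mu_{q^n-1}} = \chi_q^{q^i}|_{\mu_{q^n-1}}$, which is immediate from $\chi_q|_{\mu_{q^n-1}\times U_1}$ having order exactly $p$ together with the assumption that $q$ has order exactly $n$ modulo $p$.

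For the symplectic structure I would define an alternating form by $\omega(e_i, e_{i+n/2}) = 1$ and $\omega(e_{i+n/2}, e_i) = -1$ for $0 \le i < n/2$, with all other pairings zero; this is manifestly nondegenerate. Invariance under $h \in H$ on the pair $(e_i, e_{i+n/2})$ amounts to $\chi_q^{q^i(1 + q^{n/2})} = 1$; on $\mu_{q^n-1} \times U_1$ this follows from $q^{n/2} \equiv -1 \pmod p$ (the Set-up condition, since $p \mid q^n-1$ while $p \nmid q^{n/2}-1$ and the character has order exactly $p$ on this subgroup), and on the uniformiser $q$ it reduces to $(-1)^{q^i(1+q^{n/2})} = 1$, which holds since $q$ is odd while $1+q^{n/2}$ is even. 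The only nontrivial case of $\phi$-invariance occurs at $(i, j) = (n/2-1, n-1)$ (and its antisymmetric partner) and is controlled precisely by the sign $\chi_q(\phi^n) = -1$; this is exactly what makes the $G$-invariant form alternating rather than symmetric, explaining why $\rho_q$ is symplectic rather than orthogonal. In this basis, $\rho_q$ therefore takes values in $\Sp_n(\Qbar_\ell)$.

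Reducing modulo $\ell$ is routine: since $\ell \nmid 2p$ (as $\ell \geq 5$ and $\ell \neq p$), the image of $\chi_q$ consists of $2p$-th roots of unity lying in $\Zbar_\ell^\times$ and reducing injectively to $\Fbar_\ell^\times$, so $\overline{\chi}_q$ still has order $2p$ and all preceding computations descend, giving $\overline{\rho}_q$ irreducible and symplectic over $\Fbar_\ell$. To identify the image as an $(n,p)$-group in the sense of \cite{KLS1} I would verify the defining conditions directly against the explicit description: $\overline{\rho}_q(H)$ is abelian and diagonal in the basis $(e_i)$, has index $n$, and contains an element of order $p$, while $G/H$ acts on it by cyclically permuting the diagonal characters $\overline{\chi}_q^{q^i}$. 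The final assertion that $\overline{\rho}_q \otimes \overline{\alpha}$ remains irreducible is immediate, since twisting by any one-dimensional character preserves irreducibility. The main obstacle is the careful bookkeeping in the symplectic-form verification---tracking parities of exponents and the exact normalisation of local reciprocity so that $\phi^n$ corresponds to the uniformiser $q$---together with matching the explicit structure of $\overline{\rho}_q(G)$ to the definition of an $(n,p)$-group from \cite{KLS1}.
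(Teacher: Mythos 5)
The paper does not actually prove this lemma: all of it except the final assertion is quoted from Section~3.1 of \cite{KLS1}, and the only argument the text supplies is for the irreducibility of $\overline{\rho}_q\otimes\overline{\alpha}$, which it obtains by rewriting the twist as $\Ind^{G_{\QQ_q}}_{G_{\QQ_{q^n}}}(\overline{\chi}_q\otimes\overline{\alpha}\vert_{G_{\QQ_{q^n}}})$ and checking, via the irreducibility criterion for inductions from a normal subgroup, that the $n$ conjugate characters are distinct on inertia (this induced-representation form of the statement is what gets reused later in Lemma~\ref{lem:not_induced}). Your proposal is therefore a genuinely different route: you reconstruct the content of the citation by an explicit computation in the basis $e_0,\dots,e_{n-1}$, which is in the spirit of what Khare--Larsen--Savin do, and your one-line argument for the last assertion (a character twist of an irreducible representation is irreducible) is simpler than the paper's and perfectly adequate for the statement as given. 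The irreducibility and reduction-mod-$\ell$ steps are correct, and the construction of an invariant alternating form pairing $e_i$ with $e_{i+n/2}$ is the right idea, with the sign $\chi_q(q)=-1$ correctly identified as the reason the representation is symplectic rather than orthogonal.

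Two points need tightening. First, conjugation by $\phi^{n/2}$ acts on $\QQ_{q^n}^{\times}$ through the Galois action of the unramified extension: it raises roots of unity to the power $q^{n/2}$ but \emph{fixes} the uniformiser $q$. So the invariance check on the uniformiser reads $\chi_q(q)\cdot\chi_q^{\phi^{n/2}}(q)=(-1)(-1)=1$, not ``$(-1)^{q^i(1+q^{n/2})}=1$''; your parity argument happens to give the same answer, but the exponent bookkeeping as written conflates the action on $\mu_{q^n-1}$ with the action on $q^{\ZZ}$. Second, $\chi_q(\phi^n)$ is the value of $\chi_q$ at the image of $\phi^n$ under reciprocity, which is $q$ times a unit depending on the chosen Frobenius lift $\phi$; since $\chi_q$ is nontrivial on units, getting exactly $-1$ is not automatic, and one must either fix the lift appropriately or rescale the constants $\omega(e_i,e_{i+n/2})$ by suitable roots of unity (you flag this normalisation issue yourself, but it is the one place where the computation could silently go wrong). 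Finally, the identification of the image as an $(n,p)$-group is left as ``check the definition''; that is no less than what the paper offers, since it too only refers to \cite{KLS1} for the definition and the verification.
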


Note that also the reduction of $\rho_q$ is $\Ind^{G_{\QQ_q}}_{G_{\QQ_{q^n}}}(\overline{\chi}_q)$,
which is an irreducible representation.
Here $\overline{\chi}_q$ is the composite of $\chi_q$ and the projection
$\Zbar_\ell \twoheadrightarrow \Fbar_\ell$.
To see why the last assertion is true, note that to see that $\overline{\rho}_q\otimes \overline{\alpha}=\Ind^{G_{\QQ_q}}_{G_{\QQ_{q^n}}}(\overline{\chi}_q\otimes (\overline{\alpha}\vert_{G_{\QQ_{q^n}}}))$ is irreducible, it suffices to prove that the $n$ characters $\overline{\chi}_q\otimes(\overline{\alpha}\vert_{G_{\QQ_{q^n}}}), (\overline{\chi}_q\otimes (\overline{\alpha}\vert_{G_{\QQ_{q^n}}}))^q, \dots, (\overline{\chi}_q\otimes (\overline{\alpha}\vert_{G_{\QQ_{q^n}}}))^{q^{n-1}}$ are different (cf.\ \cite{Serre:LinearGroups}, Proposition 23, Chapter 7). But the order of the restriction of $\overline{\chi}_q\otimes (\overline{\alpha}\vert_{G_{\QQ_{q^n}}})$ to the inertia group at $q$ is $p$ (since $\overline{\alpha}$ is unramified), and the order of $q$ mod $p$ is $n$.

\subsection{Regular Galois representations}

In our result we assume that our representation $\rho$ is regular, which is a condition on the tame inertia weights of $\rho$.

\begin{defi}[Regularity]\label{defi:regularity}
Let $\ell$ be a prime number, $n$ a natural number, $V$ an $n$-dimensional vector space over $\overline{\mathbb{F}}_{\ell}$  and $\rho:G_{\QQ_{\ell}}\rightarrow \GL(V)$ a Galois representation,
and denote by $I_{\ell}$ the inertia group at $\ell$.
We say that $\rho$ is \emph{regular} if there exists an integer~$s$ between $1$ and $n$,
and for each $i=1, \dots, s$,
a set $S_i$ of natural numbers in $\{0,1,\dots,\ell-1\}$, of cardinality $r_i$,
with $r_1 + \cdots + r_s=n$, say $S_i=\{a_{i, 1}, \dots, a_{i, r_i}\}$,
such that the cardinality of $S = S_1\cup\dots\cup S_s$ equals~$n$
(i.e.\ all the $a_{i,j}$ are distinct) and such that,
if we denote by $B_i$ the matrix
$$B_i\sim \begin{pmatrix} \psi_{r_i}^{b_i} & \ & \ & 0 \\
\ & \psi_{r_i}^{b_i\ell} & \ & \ \\ \ & \ & \ddots & \ \\ 0 & \ & \
& \psi_{r_i}^{b_i\ell^{r_i-1}}\end{pmatrix}$$
with $\psi_{r_i}$ our fixed choice of fundamental character of niveau~$r_i$ and
$b_i=a_{i, 1} + a_{i, 2}\ell + \cdots + a_{i, r_i}\ell^{r_i-1}$,
then
$$\rho\vert_{I_{\ell}}\sim\left(\begin{array}{c  c  c }
B_1 & \multicolumn{1}{|c}{} & * \\
\cline{1-1}
\  & \ddots & \ \\
\cline{3-3}
0 & \ & \multicolumn{1}{|c}{B_s} \\
\end{array}\right).$$ The elements of $S$ are called \emph{tame inertia weights} of $\rho$.
We will say that $\rho$ has \emph{tame inertia weights at most $k$} if $S\subseteq \{0, 1, \dots, k\}$.
We will say that a global representation $\rho:{G_{\QQ}}\rightarrow \GL(V)$ is \emph{regular} if $\rho\vert_{G_{\QQ_{\ell}}}$ is regular.
\end{defi}

\begin{lem}\label{lem:technical}
Let $\rho:G_{\QQ_{\ell}}\rightarrow \GL_n(\Fbar_{\ell})$ be a  Galois representation which is regular with tame inertia weights at most $k$.
Assume that $\ell> kn!+1$.
Then all the $n!$-th powers of the characters on the diagonal of $\rho\vert_{I_{\ell}}$ are distinct.
\end{lem}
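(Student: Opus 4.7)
The plan is to establish the contrapositive: if $\chi_m^{n!} = \chi_{m'}^{n!}$ for two characters on the diagonal, indexed by pairs $(i,j)$ and $(i',j')$ in the block description of Definition~\ref{defi:regularity}, then $(i,j) = (i',j')$. I index the $n$ diagonal entries by pairs with $1\le i\le s$ and $1\le j\le r_i$, so that the $(i,j)$-entry is $\chi_m = \psi_{r_i}^{b_i \ell^{j-1}}$. I identify the character group of the tame inertia quotient with the prime-to-$\ell$ part of $\QQ/\ZZ$ via $\psi_r \leftrightarrow 1/(\ell^r-1) \bmod \ZZ$; under this identification $\chi_m$ is represented by $x_m := r_m/(\ell^{r_i}-1) \in [0,1)$, where $r_m$ denotes the reduction of $b_i\ell^{j-1}$ modulo $\ell^{r_i}-1$.

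The first step is a size estimate. Because $\ell^{r_i} \equiv 1 \pmod{\ell^{r_i}-1}$, reducing $b_i\ell^{j-1} = \sum_{l=1}^{r_i} a_{i,l}\ell^{l+j-2}$ modulo $\ell^{r_i}-1$ cyclically shifts the base-$\ell$ digit sequence $(a_{i,1},\dots,a_{i,r_i})$. All digits are bounded by $k$, so $r_m \le k(\ell^{r_i}-1)/(\ell-1)$ and hence $x_m \le k/(\ell-1)$. The hypothesis $\ell > kn!+1$ then yields $n!\,x_m < 1$, and similarly $n!\,x_{m'} < 1$.

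The second step is immediate: the equality $\chi_m^{n!} = \chi_{m'}^{n!}$ translates to $n!(x_m-x_{m'}) \in \ZZ$; but $n!\,x_m$ and $n!\,x_{m'}$ both lie in $[0,1)$, so their difference lies in $(-1,1)$, forcing it to be $0$. Hence $x_m = x_{m'}$, i.e.\ $\chi_m = \chi_{m'}$ as characters of tame inertia.

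The third step, which I expect to be the main obstacle because it genuinely uses the structural content of Definition~\ref{defi:regularity}, is to conclude $(i,j) = (i',j')$ from $\chi_m = \chi_{m'}$. I would first show that the niveau of $\chi_m$ is exactly $r_i$: $\psi_{r_i}^{r_m}$ descends to a character of $\FF_{\ell^d}^\times$ (for $d\mid r_i$) if and only if $(\ell^{r_i}-1)/(\ell^d-1)$ divides $r_m$, which is equivalent to the base-$\ell$ digit sequence of $r_m$ being $d$-periodic; but since that sequence is a cyclic rearrangement of the pairwise distinct $a_{i,1},\dots,a_{i,r_i}$, no such period $d<r_i$ is possible. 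Thus $\chi_m = \chi_{m'}$ forces $r_i = r_{i'}$. At this common niveau, the equality gives $r_m = r_{m'}$ in $[0,\ell^{r_i}-1)$, and as $k<\ell$ the base-$\ell$ expansion is unique, so the two cyclic rearrangements of $S_i$ and $S_{i'}$ coincide as sequences; in particular $S_i = S_{i'}$ as sets, and the pairwise disjointness of $S_1,\dots,S_s$ built into Definition~\ref{defi:regularity} (through $|S|=n=\sum r_i$) forces $i=i'$. Once $i=i'$, the distinctness of the $a_{i,l}$ makes the cyclic shift uniquely determined, pinning down $j=j'$.
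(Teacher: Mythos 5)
Your proof is correct. It rests on the same core idea as the paper's proof---the tame inertia weights are so small relative to $\ell$ that the $n!$-th power map cannot identify two distinct diagonal characters---but the execution differs in two places worth noting. First, you work directly in the character group $\varinjlim \frac{1}{\ell^r-1}\ZZ/\ZZ \subset \QQ/\ZZ$ and observe that every diagonal character has representative $x_m \le k/(\ell-1) < 1/n!$, so that the $n!$-th power map is injective on the set of diagonal characters; the paper instead lifts both characters to the common niveau $r_ir_j$ and converts the hypothetical equality into a divisibility $(\ell^{r_ir_j}-1)\mid C_0$ with $0<C_0<\ell^{r_ir_j}-1$. These are two packagings of the same inequality. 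Second, and more substantively, your argument that the $n$ diagonal characters are themselves pairwise distinct (exact niveau via non-periodicity of the digit string, then uniqueness of base-$\ell$ digits, disjointness of the $S_i$, and triviality of the cyclic stabiliser of a sequence with distinct entries) is genuinely different from, and longer than, the paper's one-line version of this step: the paper reduces $C_0$ modulo $\ell$ and notes that it is congruent to $n!(c_0-d_0)\not\equiv 0$ because the constant digits $c_0,d_0$ are distinct elements of $S$ and $n!<\ell$. Your route costs more work but yields a bit more information (the exact niveau of each diagonal character), and its clean separation into ``injectivity of the $n!$-power map on small characters'' plus ``distinctness of the diagonal characters'' makes the role of each hypothesis more visible. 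Both proofs implicitly use a coherent choice of fundamental characters ($\psi_{rr'}^{(\ell^{rr'}-1)/(\ell^{r}-1)}=\psi_r$), so no gap there.
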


\begin{proof}
We use the notation of Definition~\ref{defi:regularity}.
Assume we had that the $n!$-th powers of two characters of the diagonal coincide, say $$\psi_{r_i}^{n!(c_0 + c_1\ell + \cdots + c_{r_i-1}\ell^{r_{i}-1})}=\psi_{r_j}^{n!(d_0 + d_1\ell + \cdots + d_{r_j-1}\ell^{r_{j}-1})},$$ where $c_0, \dots, c_{r_i-1}, d_0, \dots, d_{r_j-1}$ are distinct elements of $S_1\cup \cdots \cup S_s$.

Let $\psi_{r_ir_j}$ be a fundamental character of niveau $r_ir_j$ such that $\psi_{r_ir_j}^{\frac{\ell^{r_ir_j}-1}{\ell^{r_i}-1}}=\psi_{r_i}$ and $\psi_{r_ir_j}^{\frac{\ell^{r_ir_j}-1}{\ell^{r_j}-1}}=\psi_{r_j}$. We can write the equality above as
$$\psi_{r_ir_j}^{\frac{\ell^{r_ir_j}-1}{\ell^{r_i}-1}n!(c_0 + c_1\ell + \cdots + c_{r_i-1}\ell^{r_{i}-1})}=\psi_{r_ir_j}^{\frac{\ell^{r_ir_j}-1}{\ell^{r_j}-1}n!(d_0 + d_1\ell + \cdots + d_{r_j-1}\ell^{r_{j}-1})}.$$
In other words, $\ell^{r_ir_j}-1$ divides the quantity $$C_0=\left| \frac{\ell^{r_ir_j}-1}{\ell^{r_i}-1}n!(c_0 + c_1\ell + \cdots + c_{r_i-1}\ell^{r_{i}-1}) - \frac{\ell^{r_ir_j}-1}{\ell^{r_j}-1}n!(d_0 + d_1\ell + \cdots + d_{r_j-1}\ell^{r_{j}-1})\right|.$$ Note that $C_0$ is nonzero because modulo $\ell$ it is congruent to $n!(c_0-d_0)$, and by assumption all elements in $S_1\cup \cdots \cup S_s$ are in different congruence classes modulo $\ell$. But $\vert c_{0} + c_{1}\ell + \cdots + c_{r_i-1}\ell^{r_i-1}\vert\leq k(1 + \ell + \cdots + \ell^{r_i-1})= k (\ell^{r_i}-1)/(\ell-1)$. Analogously $\vert d_{0} + d_{1}\ell + \cdots + d_{r_j-1}\ell^{r_j-1}\vert< k (\ell^{r_j}-1)/(\ell-1)$. Thus $C_0$ is bounded above by
\begin{multline*}
\max\{\left| \frac{\ell^{r_ir_j}-1}{\ell^{r_i}-1}n!(c_0 + c_1\ell + \cdots + c_{r_i-1}\ell^{r_{i}-1})\right|, \left| \frac{\ell^{r_ir_j}-1}{\ell^{r_j}-1}n!(d_0 + d_1\ell + \cdots + d_{r_j-1}\ell^{r_{j}-1})\right| \}\\ 
\leq n!k\left(\frac{\ell^{r_ir_j}-1}{\ell-1}\right)<n!k\left(\ell^{r_ir_j-1} + 2\ell^{r_ir_j-2}\right).
\end{multline*}
Since $\ell-2\geq n!k$, we have $\ell^2-1>\ell^2-4\geq n!k(\ell+2)$ and thus $C_0<n!k(\ell^{r_ir_j-1} +2\ell^{r_ir_j-2})= n!k(\ell+2) \ell^{r_ir_j-2}< \ell^{r_ir_j}-1$.
Hence $\ell^{r_ir_j}-1$ cannot divide $C_0$.
\end{proof}

We will now use these lemmas to study the ramification at $\ell$ of an induced representation under the assumption of regularity (possibly after a twist by a power of the cyclotomic character) and boundedness of tame inertia weights.

\begin{prop}\label{prop:RamAtEll}
Let $n, m, k\in \NN$, $a\in \mathbb{Z}$ and let $\ell> kn!+1$ be a prime, $K/\QQ$ a finite extension such that $[K:\QQ]\cdot m=n$, $\rho:G_K\rightarrow \GL_m(\Fbar_{\ell})$ a Galois representation and let $\beta=\Ind_{G_K}^{G_{\QQ}}\rho$. If $\chi_{\ell}^a\otimes \beta$ is regular with tame inertia weights at most $k$, then $K/\QQ$ does not ramify at~$\ell$.
\end{prop}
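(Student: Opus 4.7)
The plan is a contradiction argument: assume some prime $\mathfrak{l}$ of $K$ above $\ell$ has $e := e_\mathfrak{l} > 1$, and exhibit two tame characters on the diagonal of $(\chi_\ell^a\otimes\beta)|_{I_\ell}^{\mathrm{ss}}$ whose $n!$-th powers coincide --- contradicting Lemma~\ref{lem:technical}, which applies because $\chi_\ell^a\otimes\beta$ is regular with tame inertia weights at most $k$ and $\ell>kn!+1$. Since twisting by $\chi_\ell^a$ rescales every diagonal character by one single fixed character, the distinctness of $n!$-th powers is preserved, and it suffices to exhibit such a coincidence inside $\beta|_{I_\ell}^{\mathrm{ss}}$.

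To analyse $\beta|_{I_\ell}^{\mathrm{ss}}$, I would apply Mackey's formula first at the decomposition group $G_{\QQ_\ell}\subseteq G_\QQ$, yielding $\bigoplus_{\mathfrak{l}\mid\ell}\Ind_{G_{K_\mathfrak{l}}}^{G_{\QQ_\ell}}\rho|_{G_{K_\mathfrak{l}}}$, and then at the inertia subgroup $I_\ell$, producing summands of the form $\Ind_{I^{(i)}}^{I_\ell}\sigma^{(i)}$ with $I^{(i)}$ a $G_{\QQ_\ell}$-conjugate of $I_\mathfrak{l}$ inside $I_\ell$ of index $e$ and $\sigma^{(i)}$ the corresponding conjugate of $\rho|_{I_\mathfrak{l}}$. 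As every simple $\Fbar_\ell$-representation of $I_\ell$ is a character of the tame quotient $J := I_\ell/I_\ell^{\wild}$ (the wild inertia being pro-$\ell$ and normal), and induction at finite index is exact, the Jordan--H\"older content of the whole decomposition is determined by that of $\Ind_{I^{(i)}}^{I_\ell}\chi$ as $\chi$ ranges over the tame characters appearing in $(\sigma^{(i)})^{\mathrm{ss}}$.

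The key step --- the main obstacle --- is the multiplicity claim: writing $J^{(i)}\subseteq J$ for the image of $I^{(i)}$ (so $[J:J^{(i)}]=e^{\mathrm{t}}$, the prime-to-$\ell$ part of $e$, and $J/J^{(i)}$ is cyclic since $J$ is procyclic) and $e^{\mathrm{w}}:=e/e^{\mathrm{t}}$,
$$\bigl(\Ind_{I^{(i)}}^{I_\ell}\chi\bigr)^{\mathrm{ss}}\;\cong\;\bigoplus_{\eta|_{J^{(i)}}=\overline{\chi}}\eta^{\oplus e^{\mathrm{w}}},$$
where $\eta$ runs through the $e^{\mathrm{t}}$ characters of $J$ extending $\overline{\chi}$ (existence and count coming from the presence of all $e^{\mathrm{t}}$-th roots of unity in $\Fbar_\ell^\times$). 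I would verify this using Frobenius reciprocity (to show every composition factor extends $\overline{\chi}$), a dimension count, and Mackey restriction to the pro-$\ell$ group $I_\ell^{\wild}$, whose only simple representation is trivial and which forces each $\eta$ to have multiplicity exactly $e^{\mathrm{w}}$.

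Granting the claim: since $e>1$ at least one of $e^{\mathrm{w}},e^{\mathrm{t}}$ exceeds~$1$. If $e^{\mathrm{w}}>1$, each extension $\eta$ appears with multiplicity $\geq 2$ and two diagonal characters of $\beta|_{I_\ell}^{\mathrm{ss}}$ coincide outright. If $e^{\mathrm{t}}>1$, two distinct extensions $\eta_1\neq\eta_2$ occur whose ratio is a character of $J/J^{(i)}$ of order dividing $e^{\mathrm{t}}\leq e\leq[K:\QQ]\leq n$, hence dividing $n!$, giving $\eta_1^{n!}=\eta_2^{n!}$. Either way the $n!$-th powers of the diagonal characters are not pairwise distinct, contradicting Lemma~\ref{lem:technical} and completing the argument.
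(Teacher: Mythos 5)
Your strategy is sound and genuinely different from the paper's. The paper passes to the Galois closure $N$ of $K$, picks $\sigma\in I_\ell\setminus G_N$, and uses that $\sigma^{-1}\tau\sigma\tau^{-1}\in I_{\ell,\wild}$ for $\tau$ in the inertia group of $N$ (the tame quotient being abelian), so that two distinct Mackey blocks of $\beta|_{I_N}$ would have identical eigenvalue systems; this contradicts the pairwise distinctness of the diagonal characters restricted to $I_N$, which follows from Lemma~\ref{lem:technical} because $[I_\ell:I_N]$ divides $n!$. Your route instead analyses the Jordan--H\"older content of $\beta|_{I_\ell}$ directly. Your tame case is correct: Frobenius reciprocity shows the head (and socle) of $\Ind_{I^{(i)}}^{I_\ell}\chi$ contains each of the $e^{\mathrm{t}}$ extensions $\eta$ of $\overline{\chi}$ exactly once, and when $e^{\mathrm{t}}>1$ two of them differ by a character of the cyclic group $J/J^{(i)}$ of order at most $n$, whence $\eta_1^{n!}=\eta_2^{n!}$, contradicting Lemma~\ref{lem:technical} after untwisting by $\chi_\ell^a$.

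The problem is the ``key step''. The multiplicity formula $\bigl(\Ind_{I^{(i)}}^{I_\ell}\chi\bigr)^{\mathrm{ss}}\cong\bigoplus_\eta\eta^{\oplus e^{\mathrm{w}}}$ is false in general, and the proposed justification does not close: Frobenius reciprocity controls only the head and the socle of the induced module, not the intermediate composition factors. Concretely, for $G=S_3$, $H=\langle(12)\rangle$ and $\ell=3$ one has $(\Ind_H^G\mathbf{1})^{\mathrm{ss}}=\mathbf{1}^{\oplus 2}\oplus\mathrm{sgn}$, and $\mathrm{sgn}|_H\neq\mathbf{1}$; the same phenomenon threatens here because the tame quotient acts nontrivially by conjugation on the graded pieces of the radical filtration of $\Fbar_\ell\bigl[I_\ell^{\wild}/(I_\ell^{\wild}\cap I^{(i)})\bigr]$, so composition factors need not extend $\overline{\chi}$ and need not all occur with multiplicity $e^{\mathrm{w}}$. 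Hence the branch $e^{\mathrm{w}}>1$ of your dichotomy is not established as written. Fortunately that branch is vacuous: $e\le[K:\QQ]\le n<\ell$ while $e^{\mathrm{w}}$ is a power of $\ell$, so $e^{\mathrm{w}}=1$ and $e=e^{\mathrm{t}}$. If you insert that one-line observation and drop the unproved multiplicity claim, retaining only the Frobenius-reciprocity statement about the head, your proof is complete and is an acceptable alternative to the paper's argument.
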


\begin{proof} Assume that $K/\QQ$ ramifies at $\ell$; we will derive a contradiction. First of all, let us fix some notation: let $N/\QQ$ be the Galois closure of $K/\QQ$, and let us fix a prime $\lambda$ of $N$ above $\ell$. Denote by $I_{\ell}\subset G_{\QQ}$ the inertia group at $\ell$, $I_{\ell, \wild}\subset I_{\ell}$ the wild inertia group at $\ell$ and $I_N\subset G_{N}$ the inertia group at the prime $\lambda$. Let $W$ be the $\overline{\mathbb{F}}_{\ell}$-vector space underlying $\rho$. For each $\gamma\in G_{\QQ}$, denote $^{\gamma}K=\gamma(K)$ and define $^{\gamma}\rho:G_{^{\gamma}K}\rightarrow \GL(W)$ by $^{\gamma}\rho(\sigma)= \rho(\gamma\sigma\gamma^{-1})$.

Let us now pick any $\gamma\in G_{\QQ}$, $\sigma\in I_{\ell}$ and $\tau\in I_{N}$. Since $I_{\ell}/I_{\ell, \wild}$ is cyclic, we have that the commutator $\sigma^{-1}\tau\sigma\tau^{-1}$ belongs to $I_{\ell, \wild}$. Since $I_N\subset I_{\ell}$ is normal, $\sigma^{-1} \tau\sigma\in I_N\subset G_N\subset G_{^{\gamma}K}$, so we may apply $^{\gamma}\rho$ and conclude
\begin{equation*}
^{\gamma}\rho(\sigma^{-1} \tau \sigma)^{\gamma}\rho(\tau^{-1})=\ ^{\gamma}\rho(\sigma^{-1} \tau \sigma\tau^{-1})\in\ ^{\gamma}\rho(I_{\ell,\wild}),
\end{equation*} hence $^{\gamma}\rho(\sigma^{-1}\tau\sigma)$ and $^{\gamma}\rho(\tau)$ have exactly the same eigenvalues.

Since $N/\QQ$ ramifies in $\ell$, we may pick $\sigma\in I_{\ell}\setminus G_N$, and since $N=\prod_{\gamma\in G_{\QQ}}\ ^{\gamma}K$, there exists some $\gamma\in G_{\QQ}$ such that $\sigma\not\in G_{^{\gamma}K}$. This implies that $\beta(\sigma\gamma)(W)\cap\beta(\gamma)(W)=0$. Choose now a set of left-coset representatives $\{\gamma_1G_K, \dots, \gamma_dG_K\}$ of $G_{K}$ in $G_{\QQ}$ with $\gamma_1=\gamma$ and $\gamma_2=\sigma\gamma$; Mackey's formula (\cite{CR}, 10.13) implies that
\begin{equation*}
 \Res_{G_N}^{G_{\QQ}}\Ind_{G_K}^{G_{\QQ}}\rho=\bigoplus_{i=1}^d \Res_{G_N}^{G_{^{\gamma_i}K}} \ ^{\gamma_i}\rho.
\end{equation*} Therefore $\beta(\tau)$ is a block-diagonal matrix, where one block is $^{\gamma}\rho(\tau)$ and another block is $^{\sigma\gamma}\rho(\tau)=\ ^{\gamma}\rho(\sigma^{-1}\tau\sigma)$. But, by hypothesis, the tame inertia weights of $\chi_{\ell}^a\otimes\beta$ are bounded. By Lemma \ref{lem:technical}, we have that the $n!$-powers of the characters on the diagonal of $\chi_{\ell}^a\otimes\beta\vert_{I_{\ell}}$ are all different, which implies that the characters on the diagonal of $\beta\vert_{I_N}$ are all different. Thus  $^{\gamma}\rho(\tau)$ and  $^{\gamma}\rho(\sigma^{-1}\tau\sigma)$ cannot have the same eigenvalues for all $\tau\in I_N$.
\end{proof}

\subsection{Representations induced in two ways}

We need a proposition concerning representations induced from different subgroups
of a certain group~$G$.

\begin{prop}\label{prop:nh}
Let $G$ be a  finite group, $N\lhdeq G$, $H\le G$. Assume $(G:N)=n$, and let $p>n$ be a prime. Let $K$ be a field of characteristic coprime to $\vert G\vert$ containing all $\vert G\vert$-th roots of unity. Let $S$ be a $K[H]$-module, $\chi:N\rightarrow K^{\times}$ a character, say $\chi=\chi_1\otimes \chi_2$, where $\chi_1:N\rightarrow K^{\times}$ (resp. $\chi_2:N\rightarrow K^{\times}$) has order equal to a nontrivial power of $p$ (resp. not divisible by $p$). Assume $$\rho:=\Ind^G_H(S)=\Ind^G_N(\chi),$$ and furthermore the $n$ caracters $\{\chi_1^{\sigma}: \sigma\in G/N\}$ are different. Then $N\le H$.
\end{prop}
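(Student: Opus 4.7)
The plan is to restrict $\rho$ to $N$ and compute the result in two different ways via Mackey's formula, and then compare.

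First, since $N\lhdeq G$ and $\rho\cong \Ind^G_N(\chi)$, Mackey's formula collapses (because $\tau N\tau^{-1}=N$) to $\rho|_N\cong \bigoplus_{\sigma\in G/N}\chi^{\sigma}$. I will observe that these $n$ characters are pairwise distinct: because the orders of $\chi_1$ and $\chi_2$ are coprime, any coincidence $\chi^{\sigma}=\chi^{\sigma'}$ factors into $\chi_1^{\sigma}=\chi_1^{\sigma'}$ together with $\chi_2^{\sigma}=\chi_2^{\sigma'}$, and the first of these forces $\sigma\equiv\sigma'\pmod N$ by the standing hypothesis on $\chi_1$. Thus $\rho|_N$ is a multiplicity-free sum of $n$ distinct characters of~$N$.

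Second, applying Mackey to the other presentation $\rho\cong \Ind^G_H(S)$ yields
\[
\rho|_N \;\cong\; \bigoplus_{\tau\in N\backslash G/H}\Ind^N_{N\cap \tau H\tau^{-1}}\bigl({}^{\tau}S\big|_{N\cap \tau H\tau^{-1}}\bigr).
\]
The hypothesis $\mathrm{char}(K)\nmid |G|$ makes every $K[N]$-module in sight semisimple, so by uniqueness of decomposition each summand on the right hand side equals a subsum $\bigoplus_{\sigma\in J_\tau}\chi^\sigma$ of the first decomposition, with the sets $J_\tau$ partitioning $G/N$. I focus on the double coset of $\tau=1$: the summand $\Ind^N_{N\cap H}(S|_{N\cap H})$ has dimension $[N:N\cap H]\dim S$, so $|J_1|=[N:N\cap H]\dim S$.

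The crucial step will be to show that the $n$ restricted characters $\chi^{\sigma}|_{N\cap H}$ remain pairwise distinct. As in the first paragraph, any coincidence reduces via the coprime-order splitting to showing that the character $\psi:=\chi_1^{\sigma}/\chi_1^{\sigma'}$ of $N$, if trivial on $N\cap H$, must be trivial on all of~$N$. Since $\ker\psi$ is a normal subgroup of $N$ containing $N\cap H$, the order of $\psi$ equals $[N:\ker\psi]$ and so divides $[N:N\cap H]=[NH:H]$; this in turn divides $[G:H]=n/\dim S$, hence divides $n$. As the order of $\psi$ is a power of~$p$ and $p>n$, this order must be~$1$, giving $\chi_1^{\sigma}=\chi_1^{\sigma'}$ and, by hypothesis, $\sigma\equiv\sigma'\pmod N$.

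Finally, Frobenius reciprocity applied to the $\tau=1$ summand says that the multiplicity of $\chi^{\sigma}|_{N\cap H}$ in $S|_{N\cap H}$ equals the multiplicity of $\chi^{\sigma}$ in $\Ind^N_{N\cap H}(S|_{N\cap H})$, which is $1$ for $\sigma\in J_1$ and $0$ otherwise. In view of the distinctness just established, $S|_{N\cap H}$ contains $|J_1|$ pairwise distinct characters, each with multiplicity at least one, so $\dim S\ge |J_1|=[N:N\cap H]\dim S$. This forces $[N:N\cap H]=1$ and hence $N\le H$, as desired. The only step where the hypothesis $p>n$ is genuinely needed is the divisibility chain in the preceding paragraph; everything else is routine bookkeeping with Mackey's formula and Frobenius reciprocity.
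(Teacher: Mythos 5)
Your proof is correct, but it takes a genuinely different route from the paper's. The paper exploits the irreducibility of $\rho$: it computes $1=\langle \Ind^G_H(S),\Ind^G_N(\chi)\rangle_G$ by Frobenius reciprocity down to $H$ followed by Mackey's formula, proves that each piece $\Ind^H_{H\cap N}\Res^N_{H\cap N}(\chi^{\gamma})$ is irreducible, deduces $S\simeq \Ind^H_{H\cap N}\Res^N_{H\cap N}(\chi^{\gamma})$ for some $\gamma$, and finishes by comparing $\dim S=(H:H\cap N)$ with $\dim S=(H:N\cap H)/(N:N\cap H)$. You instead restrict both presentations of $\rho$ to $N$ and compare the two decompositions, using that $\rho|_N$ is multiplicity-free; this avoids having to establish irreducibility of $\rho$ or of any induced piece, at the cost of the bookkeeping with the partition $\{J_\tau\}$ and the extra (correct) observation that the $\chi^{\sigma}|_{N\cap H}$ stay pairwise distinct. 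The essential arithmetic input is the same in both arguments and is exactly the paper's Lemma~\ref{lem:ResCharacters}: a character of $N$ of $p$-power order that is trivial on $N\cap H$ has order dividing $(N:N\cap H)=(NH:H)$, which divides $(G:H)\mid n<p$, hence is trivial. You apply this to the quotients $\chi_1^{\sigma}/\chi_1^{\sigma'}$, whereas the paper applies it to $\chi_1^{h}/\chi_1$ for $h\in H$; both uses are legitimate, and your version makes transparent that the whole argument really only needs the $n$ characters $\chi^{\sigma}$ to remain pairwise distinct after restriction to $N\cap H$.
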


Following 7.2 of \cite{Serre:LinearGroups}, if $G$ is a finite group and we are given two $G$-modules $V_1$ and $V_2$, we will denote by $\langle V_1, V_2\rangle_G:=\dim \mathrm{Hom}_G(V_1, V_2)$. It is known (Lemma 2 of Chapter 7 of \cite{Serre:LinearGroups}) that, if $\varphi_1$ and $\varphi_2$ are the characters of $V_1$ and $V_2$, then $\langle V_1, V_2\rangle_G=\langle \varphi_1, \varphi_2\rangle_G:=\frac{1}{\vert G\vert}\sum_{g\in G} \varphi_1(g^{-1})\varphi_2(g)$.

Before giving the proof, we will first prove a lemma.

\begin{lem}\label{lem:ResCharacters}
Let $G$ be a group, $N\lhdeq G$ and $H\le G$ such that $(G:H)\leq n$. Let $p$ be a prime such that $p>n$, let $K$ be a field of characteristic coprime to $\vert G\vert$ containing all $\vert G\vert$-th roots of unity, and let $\chi:N\rightarrow K^{\times}$ be a character whose order is a nontrivial power of $p$. Then  $\Res^{N}_{H\cap N}\chi$ is not trivial.
\end{lem}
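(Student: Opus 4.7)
The plan is to argue by contradiction, exploiting the size bound on $[G:H]$ coming from $p > n$ and the classical second isomorphism theorem applied to $H \cap N$.

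First I would suppose towards a contradiction that $\Res^{N}_{H\cap N}\chi$ is trivial, i.e.\ that $H\cap N \subseteq \Ker\chi$. Then $\chi$ factors through the quotient $N/(H\cap N)$, and in particular the order of $\chi$ divides $|N/(H\cap N)|$. The key observation is that, since $N$ is normal in $G$, the set $HN$ is a subgroup of $G$ containing~$H$, and the second isomorphism theorem gives the identification
\[
 N/(H\cap N) \;\cong\; HN/H.
\]
Hence $|N/(H\cap N)| = [HN:H]$ divides $[G:H] \le n$.

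Now I would combine this with the hypothesis $p>n$: the order of $\chi$ would divide an integer that is at most $n$, hence strictly less than $p$, and in particular coprime to~$p$. This contradicts the assumption that $\chi$ has order equal to a nontrivial power of~$p$, and therefore $\Res^{N}_{H\cap N}\chi$ cannot be trivial.

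There is no serious obstacle here; the only thing to be a bit careful about is the normality hypothesis on~$N$, which is exactly what makes $HN$ a subgroup and legitimises the isomorphism $N/(H\cap N)\cong HN/H$. The assumptions on $K$ (characteristic coprime to $|G|$, contains enough roots of unity) are not actually needed for this particular lemma; they are presumably in place because the lemma is a preparation for Proposition~\ref{prop:nh}, where one works with characters valued in~$K^\times$ and invokes standard character-theoretic identities.
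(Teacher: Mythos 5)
Your proof is correct and is essentially the paper's own argument: both reduce to the index identity $(HN:H)=(N:N\cap H)$ (valid because $N\lhdeq G$) together with $(HN:H)\le (G:H)\le n<p$, contradicting the fact that triviality of the restriction would force $(N:H\cap N)\ge (N:\ker\chi)\ge p$. Your closing remark is also accurate: the hypotheses on $K$ play no role in this lemma.
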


\begin{proof}
Assume $\Res^{N}_{H\cap N}\chi$ is trivial. Then $H\cap N\leq \ker\chi$. But $\ker \chi\leq N$, and the index $(N:\ker\chi)\geq p$. Therefore $(N:H\cap N)\geq p$. But on the other hand $p>n\geq(G:H)\geq (HN:H)=(N:N\cap H)$. Contradiction.
\end{proof}

\begin{proof}[Proof of Proposition \ref{prop:nh}.] Observe that $\rho$ is irreducible. Namely, 
there is a well-known criterion characterising when an induced representation is irreducible
(cf.\ \cite{Serre:LinearGroups}, Proposition 23, Chapter 7).
In particular, since  $N$ is normal in $G$, we have that
$\Ind^G_{N}\chi$ is irreducible if and only if
$\chi$ is irreducible (which clearly holds) and,
for all $g\in G/N$, $(\Res^G_N(\chi))^h$ is not isomorphic to 
$\Res^G_N(\chi)$. This last condition holds because the $n$ characters $\{\chi_1^{\sigma}: \sigma\in G/N\}$ are different, and $\chi_2$ has order prime to $p$.

Since $\rho$ is irreducible, we have that
$$1=\langle \rho, \rho\rangle_G=\langle\Ind^G_H(S), \Ind^G_N(\chi)\rangle_G=\langle S, \Res^G_H\Ind^G_N(\chi)\rangle_H=\cdots,$$
where in the last step we used Frobenius reciprocity.
Now we apply Mackey's formula (\cite{CR}, 10.13) on the right hand side; note that, since $N$ is normal,
$H\backslash G/N\simeq G/(H\cdot N)$:
$$\cdots=\langle S, \bigoplus_{\gamma\in G/(H\cdot N)}
\Ind^H_{H\cap N}\Res^N_{H\cap N}(\chi^{\gamma})\rangle_H
=\sum_{\gamma\in G/(H\cdot N)}\langle S,
\Ind^H_{H\cap N}\Res^N_{H\cap N}(\chi^{\gamma})\rangle_H.$$
Hence there is a unique $\gamma\in G/(H\cdot N)$ such that
$$\langle S, \Ind^H_{H\cap N}\Res^N_{H\cap N}(\chi^{\gamma})\rangle_H=1.$$
If we prove that, for all $\gamma$,
$\Ind^H_{H\cap N}\Res^N_{H\cap N}(\chi^{\gamma})$ is irreducible, then we will have that
$$S\simeq \Ind^H_{H\cap N}\Res^N_{H\cap N}(\chi^{\gamma})$$
(for some $\gamma$), hence $\dim(S)=(H:H\cap N)$.
But, on the other hand, since $\rho=\Ind^G_H(S)=\Ind^G_N(\chi)$,
we have that $\dim(S)\cdot (G:H)=(G:N)$, so
$$\dim(S)=\frac{(G:HN)(HN:N)}{(G:HN)(HN:H)}=\frac{(H:N\cap H)}{(N:N\cap H)},$$
and therefore the conclusion is that $(N:N\cap H)=1$, in other words, $N\le H$.

Therefore to conclude we only need to see that
$\Ind^H_{H\cap N}\Res^N_{H\cap N}(\chi^{\gamma})$ is irreducible.
Since conjugation by $\gamma$ plays no role here, let us just assume $\gamma=1$.
We apply again the criterion characterising when an induced representation is irreducible.
In particular, since $H\cap N$ is normal in $H$, we have that
$\Ind^H_{H\cap N}\Res^N_{H\cap N}(\chi)$ is irreducible if and only if
$\Res^N_{H\cap N}(\chi)$ is irreducible (which clearly holds) and,
for all $h\in H/N\cap H$, $(\Res^N_{H\cap N}(\chi))^h$ is not isomorphic to 
$\Res^N_{H\cap N}(\chi)$.

So pick $h\in H\setminus N$. We have $(\Res^N_{H\cap N}(\chi))^h=\Res^N_{H\cap N}(\chi^{h})$.
Assume that $\Res^N_{H\cap N}(\chi^{h})=\Res^N_{H\cap N}(\chi)$. In particular, we obtain that $\Res^N_{H\cap N}(\chi_1^{h})=\Res^N_{H\cap N}(\chi_1)$. By Lemma~\ref{lem:ResCharacters} it holds that $\chi_1=\chi_1^{h}$ as characters of~$N$.
But we know that for all $\sigma\in G/N$,
$\chi_1^{\sigma}\neq\chi_1$. Now it suffices to observe that $H/(H\cap N)\hookrightarrow G/N$.
\end{proof}

\subsection{Proofs}

Finally we carry out the proof of Theorem~\ref{thm:principal}.

\begin{lem}\label{lem:not_induced} Assume Set-up~\ref{setup}. Let $k\in \mathbb{N}$, $\ell\not=p, q$ be a prime such that $\ell>kn!+1$ and $\ell\nmid N$.
Let $\chi_q:G_{\QQ_{q^n}}\rightarrow \Qbar_{\ell}^{\times}$ be a character
satisfying the assumptions of Lemma~\ref{lem:np}, and $\overline{\chi}_q$ 
the composition of $\chi_q$ with the reduction map $\Zbar_{\ell}\rightarrow \Fbar_{\ell}$.  Let $\overline{\alpha}:G_{\QQ_{q}}\rightarrow \overline{\mathbb{F}}_{\ell}^{\times}$ be an unramified character.

Let
$\rho:G_{\QQ}\rightarrow \GSp_n(\Fbar_{\ell})$
be a Galois representation, ramified only at the primes dividing
$Nq\ell$, such that a twist by some power of the cyclotomic
character is regular in the sense of Definition \ref{defi:regularity} with tame inertia weights at most $k$, and
satisfying (1) and (3) of Theorem~\ref{thm:principal}.
Then $\rho$ is not induced from a
representation of an open subgroup $H\subsetneq G_{\mathbb{Q}}$.\end{lem}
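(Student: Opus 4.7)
Suppose for contradiction that $\rho = \Ind^{G_{\QQ}}_{G_{K}}(\sigma)$ for a number field $K$ with $d:=[K:\QQ] \geq 2$; then $d \mid n$. The strategy is to first pin down the ramification of $K/\QQ$, then use the local irreducibility at~$q$ together with Proposition~\ref{prop:nh} to force $q$ to be unramified in $K$, and finally derive a contradiction from the complete splitting of~$q$ in $L_{0}$, treating $d \leq n/2$ and $d = n$ separately.

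The induced structure yields a permutation representation $\pi:\rho(G_{\QQ}) \to S_{d} \subseteq S_{n}$ on the cosets of $G_{K}$. For $\ell_1 \mid N_1$, assumption~(3) makes $|\rho(I_{\ell_1})|$ coprime to $n!$, so the image of $I_{\ell_1}$ in $S_n$, being a quotient of $\rho(I_{\ell_1})$ contained in $S_n$, is trivial; hence $I_{\ell_1}$ lies in the normal core of $G_{K}$ and $K/\QQ$ is unramified at~$\ell_1$. Primes outside $Nq\ell$ are handled analogously (the permutation image is already trivial), and Proposition~\ref{prop:RamAtEll}, applied to the regular twist $\chi_\ell^a \rho$ with $\ell > kn!+1$, rules out ramification at~$\ell$. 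Thus $K/\QQ$ is unramified outside the primes dividing $N_{2}q$. Next, assumption~(1) and Lemma~\ref{lem:np} give that $\rho|_{G_{\QQ_{q}}} = \Ind^{G_{\QQ_q}}_{G_{\QQ_{q^n}}}(\overline{\chi}_{q} \otimes \overline{\alpha}|_{G_{\QQ_{q^n}}})$ is irreducible; the Mackey decomposition of $\Res^{G_{\QQ}}_{G_{\QQ_q}}\rho$ being indexed by the primes of $K$ above~$q$, irreducibility forces a unique such prime~$\mathfrak{q}$, and $\rho|_{G_{\QQ_q}} = \Ind^{G_{\QQ_q}}_{G_{K_{\mathfrak{q}}}}(\sigma|_{G_{K_{\mathfrak{q}}}})$. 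Proposition~\ref{prop:nh}, applied to a suitable finite quotient of $G_{\QQ_q}$ with $N = G_{\QQ_{q^n}}$, $H = G_{K_{\mathfrak{q}}}$ and the character $\chi = \overline{\chi}_q \otimes \overline{\alpha}|_{G_{\QQ_{q^n}}}$ written as $\chi_1 \chi_2$ with $\chi_1$ the order-$p$ part, then yields $G_{\QQ_{q^n}} \subseteq G_{K_{\mathfrak{q}}}$. The hypotheses of Proposition~\ref{prop:nh} are met because $p > n$ (Set-up~\ref{setup}), the Frobenius conjugates of $\chi_1$ are distinct since $q$ has order $n$ modulo~$p$, and the order of the finite quotient is coprime to~$\ell$ thanks to tame ramification of $\overline{\chi}_q$ (Lemma~\ref{lem:np}) and $\ell \neq p,q$ with $\ell > n$. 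Consequently $K_{\mathfrak{q}} \subseteq \QQ_{q^n}$, so $q$ is unramified in~$K$ and $K/\QQ$ is unramified outside $N_{2}$.

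To derive the contradiction I split on~$d$. If $d \leq n/2$, then $K \subseteq L_{0}$ by definition, and complete splitting of~$q$ in $L_{0}$ forces $K$ to have $d \geq 2$ primes above~$q$, contradicting the uniqueness of~$\mathfrak{q}$. The only other option is $d = n$, since no divisor of~$n$ lies strictly between $n/2$ and~$n$. In this case I pass to the Galois closure $\widetilde{K}/\QQ$ with group $G \subseteq S_{n}$ acting transitively on the $n$ embeddings of~$K$; then $\widetilde{K}$ is unramified outside $N_{2}$, and $q$ being inert in~$K$ of residue degree~$n$ makes $\mathrm{Frob}_{q} \in G$ an $n$-cycle, which is an odd permutation as $n$ is even. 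For $n \geq 4$, the subgroup $G \cap A_{n}$ has index at most $2 \leq n/2$ in~$G$, so its fixed field $F \subseteq \widetilde{K}$ has degree at most $n/2$, is unramified outside $N_{2}$, and hence lies in $L_{0}$; complete splitting of~$q$ in $F$ forces $\mathrm{Frob}_{q} \in G \cap A_{n} \subseteq A_{n}$, contradicting oddness.

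The main obstacle is the case $d = n$, where $K$ itself is too large to fit into $L_{0}$ and one must pass to the Galois closure and extract a quadratic (discriminant) subfield via the sign character of~$S_{n}$, exploiting the parity of $n$-cycles. The profinite hypothesis-checking for Proposition~\ref{prop:nh} — in particular the coprime-order and distinct-conjugates conditions, and the fact that the kernel of $\rho|_{G_{\QQ_q}}$ is contained in both $G_{\QQ_{q^n}}$ and $G_{K_{\mathfrak{q}}}$ — is a secondary but essential technical point.
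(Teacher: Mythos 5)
Your proof is correct and follows essentially the same route as the paper's: Mackey's formula at~$q$ combined with the irreducibility of $\Ind^{G_{\QQ_q}}_{G_{\QQ_{q^n}}}(\overline{\chi}_q)\otimes\overline{\alpha}$ and Proposition~\ref{prop:nh} to force $G_{\QQ_{q^n}}\subseteq G_{K_{\mathfrak{q}}}$ (so $q$ has a unique, unramified prime above it in~$K$, of full residue degree $d=[K:\QQ]$); assumption~(3) together with Proposition~\ref{prop:RamAtEll} to show $K$ ramifies only at primes dividing~$N_2$; and finally the clash between inertness and complete splitting of~$q$. The one genuine difference is your explicit treatment of $d=n$: the paper concludes directly that $q$ splits completely in the field cut out by~$H$, which tacitly uses that this field lies in~$L_0$, i.e.\ that $d\le n/2$ --- automatic in the intended application (there $\rho'$ is symplectic of even dimension $n/h\ge 2$), but not imposed by the lemma as stated. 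Your sign-character argument on the Galois closure ($\Frob_q$ is an $n$-cycle, hence an odd permutation for $n$ even, while the at most quadratic fixed field of $G\cap A_n$ is unramified outside~$N_2$, hence lies in~$L_0$ and forces $\Frob_q\in A_n$) correctly disposes of this case for $n\ge 4$, so your proof in fact covers slightly more than the paper's. The only residual case is $n=2$ with $d=2$, where $L_0=\QQ$ and neither argument applies; there the lemma should be read, as it is used, as excluding induction from subgroups of index at most~$n/2$.
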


\begin{proof}
Let $H\subset G_{\QQ}$ be an open subgroup, say of index $h$, and
$\rho':H\rightarrow \GL_{n/h}(\Fbar_{\ell})$ a representation such that
$\rho\cong \Ind^{G_{\QQ}}_{H}(\rho')$.
Call $S_1\subseteq V$ the spaces underlying $\rho'$ and~$\rho$, respectively,
so that $\rho=\Ind^{G_{\QQ}}_{H}(S_1)$.
Recall that by assumption $\Res^{G_{\QQ}}_{G_{\QQ_q}}(\rho)=\Ind^{G_{\QQ_q}}_{G_{\QQ_{q^n}}}(\overline{\chi}_q)\otimes\overline{\alpha}$.
We want to compute $\Res_{G_{\QQ_q}}^{G_{\QQ}}\Ind_{H}^{G_{\QQ}}(S_1)$.
Let us apply Mackey's formula (\cite{CR}, 10.13). By Lemma \ref{lem:np} we know that $\Res_{G_{\QQ_q}}^{G_{\QQ}}\Ind_{H}^{G_{\QQ}}(S_1)=\Ind^{G_{\QQ_q}}_{G_{\QQ_{q^n}}}(\overline{\chi}_q)\otimes\overline{\alpha}$
is irreducible, so there can only be one summand in the formula, hence
$$\Res^{G_{\QQ}}_{G_{\QQ_q}}\Ind^{G_{\QQ}}_H (S_1)
=\Ind^{G_{\QQ_q}}_{G_{\QQ_q}\cap H}\Res^{H}_{G_{\QQ_q}\cap H}(S_1),$$
and therefore
\begin{equation}\label{eq:induced}
 \Ind^{G_{\QQ_q}}_{G_{\QQ_q}\cap H}\Res^{H}_{G_{\QQ_q}\cap H}(S_1)
=\Ind^{G_{\QQ_q}}_{G_{\QQ_{q^n}}}(\overline{\chi}_q)\otimes\overline{\alpha}.
\end{equation}

We now apply Proposition~\ref{prop:nh} to Equation~\eqref{eq:induced}. Note that $\Res^{G_{\QQ}}_{G_{\QQ_q}}\rho=\Ind^{G_{\QQ_q}}_{G_{\QQ_{q^n}}}(\overline{\chi}_q)\otimes\overline{\alpha}=
\Ind^{G_{\QQ_q}}_{G_{\QQ_{q^n}}}(\overline{\chi}_q\otimes(\overline{\alpha}\vert_{G_{\QQ_{q^n}}}))$. We can write $\overline{\chi}_q\otimes(\overline{\alpha}\vert_{G_{\QQ_{q^n}}})=\overline{\chi}_1\otimes \overline{\chi}_2$, where $\overline{\chi}_1$ has order a power of $p$ and $\overline{\chi}_2$ has order prime to $p$. Note that the restriction of $\overline{\chi}_q\otimes(\overline{\alpha}\vert_{G_{\QQ_{q^n}}})$ to the inertia group $I_q$ of $G_{\QQ_{q}}$ coincides with the restriction of $\overline{\chi}_q$, which has order $p$. Thus $(\overline{\chi}_1\otimes\overline{\chi}_2)\vert_{I_q}=\overline{\chi}_q\vert_{I_q}=\overline{\chi}_1\vert_{I_q}$. Since the order of $q$ mod $p$ is $n$, we know that the  $n$ characters $\overline{\chi}_1\vert_{I_q}, \overline{\chi}_1^q\vert_{I_q}, \dots \overline{\chi}_1^{q^n}\vert_{I_q}$ are distinct. We can take
$G = \rho(G_{\QQ_q})$ in the statement of Proposition~\ref{prop:nh}, whose order is a divisor of $2np\cdot \mathrm{ord}(\overline{\alpha})$ and, hence, prime to~$\ell$. 
It thus follows that $G_{\QQ_{q^n}}\le (G_{\QQ_q}\cap H)$.

Note that, on the one hand
$$n=\dim V=\dim (\Ind_H^{G_{\QQ}} S_1)=(G_{\QQ}:H)\dim (S_1).$$
On the other hand,
\begin{equation*}
n=\dim(\Ind^{G_{\QQ_q}}_{G_{\QQ_q}\cap H}\Res^{H}_{G_{\QQ_q}\cap H}(S_1))
 = (G_{\QQ_q}:G_{\QQ_q}\cap H)\dim (S_1),
\end{equation*}
hence $(G_{\QQ}:H)=(G_{\QQ_q}:G_{\QQ_q}\cap H)$.

Let $L$ be the number field such that $H=\Gal(\Qbar/L)$.
Now $\Gal(\Qbar/L)\cap G_{\QQ_q}=\Gal(\Qbar_q/L_{\mathfrak{q}})$, where $\mathfrak{q}$
is a certain prime of $L$ above $q$ and $L_{\mathfrak{q}}$ denotes the completion of $L$
at~$\mathfrak{q}$.
The inclusion $G_{\QQ_{q^n}}\le \Gal(\Qbar_{q}/L_{\mathfrak{q}})$ means that we have the
following field inclusions:
$$\QQ_{q}\subseteq L_{\mathfrak{q}}\subseteq \QQ_{q^n}\subseteq \Qbar_q$$
and $[L_{\mathfrak{q}}:\QQ_q]=(G_{\QQ_q}:G_{\QQ_q}\cap H)=(G_{\QQ}:H)=[L:\QQ],$ hence $q$ is inert in $L/\QQ$.

Let $\ell_1$ be a prime dividing $N_1$, let $\tilde{L}/\QQ$ be a Galois closure of $L/\QQ$, $\Lambda_1$ a prime of $\tilde{L}$ above $\ell_1$ and $I_1$ the inertia group of $\Lambda_1$ over $\QQ$.
Since $\gcd(\vert \rho(I_{\ell_1})\vert, n!)=1$ and $\Gal(\tilde{L}/\QQ)$ has order dividing $n!$, we get that the projection of $\rho(I_1)\subseteq \rho(I_{\ell_1})$ into $\rho(G_{\QQ})/\rho(G_{\tilde{L}})$ is trivial. Thus, $\rho(I_1)\subseteq  \rho(G_{\tilde{L}})$.
Hence $\tilde{L}/\QQ$ is unramified at $\ell_1$ and so is $L/\QQ$.

To sum up, we know that $L$ can only be ramified at the primes dividing $Nq\ell$.
But $L$ cannot ramify at $q$ since $L_{\mathfrak{q}}\subseteq \QQ_{q^{n}}$
(and $\QQ_{q^n}$ is an unramified extension of $\QQ_q$). We just saw that $L$ cannot ramify at the primes dividing $N_1$.
We also know that $L$ cannot be ramified at~$\ell$  (cf.\ Proposition \ref{prop:RamAtEll}).
Hence $L$ only ramifies at the primes dividing $N_2$.
By the choice of~$q$, it is completely split in~$L$,
and at the same time inert in $L$. This shows $L=\QQ$ and $H=G_\QQ$.
\end{proof}

Now we can easily prove the main group theoretic result.

\begin{proof}[Proof of Theorem~\ref{thm:principal}.]
Let $G=\mathrm{Im}\rho$.
Since $G$ contains a transvection, one of the following three possibilities holds
(cf.\ Corollary~\ref{cor:rep}):

\begin{enumerate}[1.]
\item $\rho$ is reducible.

\item There exists an open subgroup $H\subsetneq G_{\QQ}$, say of index $h$ with $n/h$ even, and a representation $\rho':H\rightarrow \GSp_{n/h}(\Fbar_{\ell})$ such that $\rho\cong \Ind^{G_{\QQ}}_H \rho'$.

\item The group generated by the transvections in $G$ is conjugated (in $\GSp_n(\Fbar_{\ell})$) to $\Sp_n(\FF_{\ell^r})$ for some exponent $r$.
\end{enumerate}

By Lemma~\ref{lem:np} $G$ acts irreducibly on $V$, hence the first possibility cannot occur. By Lemma \ref{lem:not_induced}, the second possibility does not occur. Hence the third possibility holds, and this finishes the proof of the theorem.
\end{proof}

\begin{proof}[Proof of Corollary~\ref{cor:principal}.]
This follows from the main theorem of Part~I (\cite{partI}) concerning the application
to the inverse Galois problem. In order to be able to apply it, there are two things to check:

Firstly, we note that $\rho_\bullet$ is maximally induced of order~$p$ at the prime~$q$.
Secondly, the existence of a transvection in the image of $\overline{\rho}_\lambda$
together with the special shape of the representation at~$q$ allow us to conclude
from Theorem~\ref{thm:principal} that the image of $\overline{\rho}_\lambda$ is huge
for all~$\lambda\vert \ell$, where $\ell$ runs through the rational primes outside a density zero set.
\end{proof}

\bibliography{Bibliog}
\bibliographystyle{alpha}

\end{document}